\def\cG{{\cal G}}
\newcommand{\brac}[1]{\left(#1\right)}
\def\cP{{\cal P}}
\def\d{\delta}
\def\e{\epsilon}
\def\om{\omega}
\def\Pr{\mbox{{\bf Pr}}}
\def\whp{{\bf whp}}
\newtheorem{lemma}{Lemma}
\newtheorem{theorem}{Theorem}
\newtheorem{remark}{Remark}
\newtheorem{definition}{Definition}
\newtheorem{proposition}{Proposition}
\def\e{\epsilon}
\def\Pr{\mbox{{\bf Pr}}}
\newcommand{\diam}{{\rm diam}}
\newcommand{\dist}{{\rm dist}}
\begin{document}
\renewcommand{\theenumi}{(\alph{enumi})}

\large

\makeatletter \title{Adding random edges to dense graphs} \author{Tom
Bohman\thanks{Department of Mathematical Sciences, Carnegie Mellon
University, Pittsburgh, PA 15213.  E-mail: {\tt
tbohman@moser.math.cmu.edu}. Supported in part by NSF grant
DMS-0100400.} \and \ Alan Frieze\thanks{Department of Mathematical
Sciences, Carnegie Mellon University, Pittsburgh, PA 15213.
E-mail: {\tt
alan@random.math.cmu.edu}.
Supported
in part by NSF grant CCR-9818411.}  \and Michael
Krivelevich\thanks{Department of Mathematics, Sackler Faculty of Exact
Sciences, Tel Aviv University, Tel Aviv 69978, Israel. E-mail: {\tt
krivelev@post.tau.ac.il}. Research supported in part by a USA-Israel
BSF grant, by a grant from the Israel Science Foundation and by a
Bergmann Memorial Grant.}  \and \ Ryan Martin\thanks{Department of
Mathematical Sciences, Carnegie Mellon University, Pittsburgh, PA
15213.  E-mail: {\tt rymartin@andrew.cmu.edu}.  Supported in part by
NSF VIGRE grant DMS-9819950.}}\date{} \maketitle \makeatother
\begin{abstract}
This paper investigates the addition of random edges to
arbitrary dense graphs; in particular, we determine the number
of random edges required to ensure various monotone
properties including the
appearance of a fixed size clique, small diameter and
$k$-connectivity.
\end{abstract}

\section{Introduction}
We consider the following random graph model, which was introduced
in~\cite{BFM}: a fixed
graph $H=(V,E)$ is given and a set \( R \) of \(m\) edges is
chosen uniformly at random from  \( \binom{V}{2} \setminus E \)
(i.e. \(R\) is a subset of the non-edges of \(H\))
in order to form a graph
$$G_{H,m} = ( V, E \cup R).$$
For a fixed constant \( 0 < d < 1 \)
let $\cG(n,d)$ denote the set of
graphs on vertex set $[n]$ which have minimum
degree at least $dn$.  We will refer to these graphs
as {\em dense graphs}.  We consider \(G_{H,m} \) where \(H\)
is an arbitrary graph in $\cG(n,d)$.

%In particular, for a fixed
%positive constant $d$, $0<d<1$, $H$ has minimum degree $\d\geq dn$ and
%a set of $m=|R|$ edges is added to create $G$.
%
%In~\cite{BFM}, we are given $d$ and let $\cG(n,d)$ denote the set of
%graphs with vertex set $[n]$ which have minimum degree $\d\geq dn$.
%The family of graphs $\cG(n,d)$ for fixed constant values of $d$ will
%be referred to as {\it dense graphs}.  We choose $H$ {\bf arbitrarily}
%from $\cG(n,d)$ and add a random set of $m$ edges to create $G$.

In the classical model of a random graph (Erd\H{o}s and R\'enyi
\cite{ER}) we add random edges to an empty graph, all at once or one
at a time, and then ask for the probability that certain structures
occur.  This model is well-studied: it has generated a wide variety of
research including at least two excellent texts, Bollob\'as~\cite{B0}
and Janson, {\L}uczak and Ruci\'nski~\cite{JLR}.
There are other random graph models as well.  For example, there is a
well established theory of considering random subgraphs of special
graphs.  In fact, our model can be seen to be an analogue by taking
complements.
%Nonetheless, we will borrow some results from the
%classical model in order to prove results in this paper.

The property investigated in~\cite{BFM} was hamiltonicity: How
large must \(m\) be in order to ensure that, with high probability,
\(G_{H,m}\) is hamiltonian (we will say that an event
occurs with high
probability, abbreviated \whp, if the probability of that
event approaches 1 as
$n\rightarrow\infty$)?
It was proved in~\cite{BFM} that there exists a constant
$c$, depending on $d$, such that $m\geq cn$ ensures a
Hamilton cycle in \(G_{H,m}\) \whp.  In addition,
there is a constant \(C\), depending on \(d\), and a graph \(H\)
with minimum degree $dn$ which remains
non-hamiltonian when adding as many as $Cn$ edges.  It is well-known
(see~\cite{B0}, ch. VIII) that the threshold function for the number
of random edges required to induce a Hamilton cycle in an empty graph is
$\left\lfloor (n/2)(\log n+\log\log n+\phi(n))\right\rfloor$ for any
function $\phi(n)\rightarrow\infty$ as $n\rightarrow\infty$.  Thus,
although there are more total edges necessary to ensure the Hamilton
cycle in our model than in $G_{n,M}$, we require fewer random edges.

In this paper we consider the clique number,
chromatic number, diameter and vertex
connectivity of \( G_{H,m} \).
For most of our results, we will be most interested in the case where
$d$ is a small constant.  In the case of a Hamilton cycle, for
example, no random edges are required for $d\geq 1/2$.
%We will use standard ``big-oh'' notation, most often we will use the
%notation $f(n)=o(g(n))$ iff $\lim_{n\rightarrow\infty}f(n)/g(n)=0$ and
%$f(n)=\omega(g(n))$ iff $ \lim_{n\rightarrow\infty}f(n)/g(n)= \infty $.
Note also that connectivity is one of the first properties to emerge
when adding edges.  Since every connected
component in a dense graph has at
least $ dn $ vertices, adding $\omega(1)$
random edges will cause the
resulting graph to be connected \whp.
Thus, if \(m\) random edges give a certain
monotone property monotone \(P\) {\bf whp} then
\( m + \omega(1) \) random edges edges give {\bf whp} a
graph that is both connected and has property \(P\).

We first look at the number of random edges needed to force $G_{H,m}$ to
contain a copy of \(K_r\) where \(r\) is a constant.  For
an arbitrary fixed graph \(H \) let
\[ m(H)=\max\left\{\frac{e(H')}{v(H')} : H'\subseteq
H, v(H')>0\right\}. \]
For the standard random graph we have the following.
\begin{theorem}[Bollob\'as~\cite{B1}]
If \(H\) is a fixed graph then
\[ \lim_{n\rightarrow\infty}\Pr\left(G_{n,M}\supseteq H\right)
   =\left\{
   \begin{array}{ll}
   1, &
   \mbox{ if $M=\omega \left(n^{2-1/m(H)}\right)$;} \\
   0, &
   \mbox{ if $M=o \left(n^{2-1/m(H)}\right)$.}
   \end{array}\right. \]
\end{theorem}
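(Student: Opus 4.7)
The plan is to proceed by the standard first-moment and second-moment methods. First, I would reduce to proving the analogous statement for the binomial random graph $G_{n,p}$ with $p = M/\binom{n}{2}$, using the well-known asymptotic equivalence of $G_{n,M}$ and $G_{n,p}$ for monotone properties at comparable expected edge counts.

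For the $0$-statement, assume $p = o(n^{-1/m(H)})$ and let $H^{\ast}$ be a subgraph of $H$ attaining the maximum in the definition of $m(H)$, so $e(H^{\ast}) = m(H)\,v(H^{\ast})$. Since $G_{n,p}\supseteq H$ implies $G_{n,p}\supseteq H^{\ast}$, it suffices to kill copies of $H^{\ast}$. Letting $X^{\ast}$ count labeled copies of $H^{\ast}$,
\[ \Exp[X^{\ast}] = \Theta\!\left(n^{v(H^{\ast})} p^{e(H^{\ast})}\right) = \Theta\!\left( \bigl(n\, p^{m(H)}\bigr)^{v(H^{\ast})} \right) = o(1), \]
since $p^{m(H)} = o(1/n)$ by hypothesis. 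Markov's inequality finishes this half.

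For the $1$-statement, assume $p = \omega(n^{-1/m(H)})$ and let $X$ count labeled copies of $H$ in $G_{n,p}$. Because $e(H)/v(H)\le m(H)$, one has $\Exp[X] = \Theta(n^{v(H)} p^{e(H)}) \to \infty$. The real work is the second moment. Grouping ordered pairs $(H_1,H_2)$ of copies of $H$ in $K_n$ by the isomorphism type of the overlap $H' = H_1\cap H_2$, the usual bookkeeping gives
\[ \frac{\Exp[X^2]}{\Exp[X]^2} = 1 + o(1) + O(1)\cdot\sum_{\emptyset\neq H'\subseteq H}\frac{1}{n^{v(H')}\,p^{e(H')}}. \]
For any nonempty $H'\subseteq H$ with $e(H')\ge 1$, the extremal property $m(H)\ge e(H')/v(H')$ gives $p\gg n^{-1/m(H)}\ge n^{-v(H')/e(H')}$, so $n^{v(H')}p^{e(H')}\to\infty$; if $e(H')=0$ the conclusion is immediate. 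Since there are only finitely many isomorphism types $H'$, the sum is $o(1)$, so $\Exp[X^2] = (1+o(1))\Exp[X]^2$, and Chebyshev (or the Paley--Zygmund inequality) gives $\Pr(X\ge 1)\to 1$.

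The main obstacle is the second-moment bookkeeping: one must verify that \emph{every} nonempty subgraph $H'$ contributes negligibly, and the reason this is exactly the content of the hypothesis $p\gg n^{-1/m(H)}$ is what pins the threshold to $m(H)$ (rather than merely $e(H)/v(H)$) and handles the case of unbalanced $H$. A secondary technical point is the $G_{n,M}\leftrightarrow G_{n,p}$ transfer, which is routine for monotone properties but should be stated carefully.
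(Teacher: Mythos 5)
Your proposal is correct, and it is exactly the approach the paper points to: the paper does not reprove this cited result of Bollob\'as, but notes that the proof is via the second moment method, which is precisely your first-moment/second-moment argument (with the routine $G_{n,M}$--$G_{n,p}$ transfer for the monotone property of containing $H$). Your second-moment bookkeeping, using $m(H)\ge e(H')/v(H')$ for every subgraph $H'$ with an edge, is the standard and correct way to pin the threshold at $n^{2-1/m(H)}$, and it even yields the stronger conclusion $X_H\to\infty$ that the paper records.
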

The proof of the above theorem is via the second moment method.
Let $X_H$ denote the number of copies of $H$ in $G_{n,M}$. One can conclude
something slightly stronger than stated:
\begin{equation}\label{randclique}
\mbox{If }M=\omega \left(n^{2-1/m(H)}\right)\mbox{ then \whp\ }
X_H\to\infty.
\end{equation}
Note that $m(K_r)=(r-1)/2$.  Furthermore, note
that
if $d\geq\frac{r-2}{r-1}$ then, by the
classic theorem of Tur\'an~\cite{DI}, only a balanced
$(r-1)$-partite graph fails to contain a complete $K_{r}$.  So,
we are only interested in \(d < \frac{r-2}{r-1} \).  Our main
result here is the following.
\begin{theorem}
   Let $r>r_0\geq 2$ be integers.  Let $d\in\left(\frac{r_0-2}{r_0-1},
   \frac{r_0-1}{r_0}\right]$ be a fixed constant.
   \begin{enumerate}
   \item If $H \in \cG(n,d)$ and $m=\omega\left(n^{2-2/\left(\lceil
   r/r_0\rceil-1\right)}\right)$ then $G_{H,m}$ contains a $K_r$
   \whp.  \label{KrSuf}
   \item There exists a graph $H_0 \in \cG(n,d)$
   such that if
   $m=o\left(n^{2-2/\left(\lceil r/r_0\rceil-1\right)}\right)$, then
   \whp\  $G_{H_0,m}$ fails to contain a $K_r$.  \label{KrNec}
   \end{enumerate}\label{Kr}
\end{theorem}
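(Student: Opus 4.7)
For part~\ref{KrNec} I would take $H_0=T_{r_0}(n)$, the balanced complete $r_0$-partite graph: since $d\le(r_0-1)/r_0$ its minimum degree is at least $dn$, so $H_0\in\cG(n,d)$, and its non-edges are exactly the intra-part pairs. Any $K_r$ inside $G_{H_0,m}=H_0\cup R$ must distribute its $r$ vertices among the $r_0$ parts, so by pigeonhole some part contains at least $k:=\lceil r/r_0\rceil$ of them, forcing $R$ to contain a $K_k$ on a part of size $n/r_0$. With $N:=\binom{n}{2}-e(H_0)\asymp n^2$, the expected number of $K_k$'s in $R$ inside a fixed part is at most $\binom{n/r_0}{k}(m/N)^{\binom{k}{2}}=O\bigl((m/n^{2-2/(k-1)})^{\binom{k}{2}}\bigr)$, which is $o(1)$ when $m=o(n^{2-2/(k-1)})$, and a union bound over the $r_0$ parts finishes the direction.

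For part~\ref{KrSuf} I would use the second moment method applied to the number $X$ of $K_r$'s in $G_{H,m}$. For an $r$-set $S\subseteq V(H)$, write $j(S)=\binom{r}{2}-e_H(S)$ and $p=m/(\binom{n}{2}-e(H))$; a short hypergeometric computation shows $\Pr[G_{H,m}[S]=K_r]=(1+o(1))p^{j(S)}$. Writing $r=r_0(k-1)+s$ with $1\le s\le r_0$, let $j_0=s\binom{k}{2}+(r_0-s)\binom{k-1}{2}$ be the number of intra-part pairs in the balanced $r_0$-partition of $[r]$; a direct calculation gives $r-2j_0/(k-1)=r_0-s\ge 0$. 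The structural heart of the argument is the supersaturation claim that every $H\in\cG(n,d)$ contains $\Omega(n^r)$ $r$-sets $S$ with $j(S)\le j_0$. To prove it I would apply Szemer\'edi's regularity lemma to $H$: the hypothesis $d>(r_0-2)/(r_0-1)$ forces the reduced graph to have density strictly above the $K_{r_0}$ Tur\'an threshold and hence to contain a $K_{r_0}$, and the counting/embedding lemma inside the $r_0$ corresponding regular pairs delivers $\Omega(n^r)$ labelled copies of the $r_0$-partite blow-up with part sizes $(a_1,\ldots,a_{r_0})$ balanced between $k-1$ and $k$; these are precisely the desired good $r$-sets. Thus $\Exp[X]\ge\Omega(n^r)\,p^{j_0}=\omega(1)$.

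For the variance I would partition ordered pairs $(S,T)$ of good $r$-sets by $|S\cap T|\in\{0,1,\ldots,r-1\}$: disjoint pairs contribute $(1+o(1))\Exp[X]^2$, while for each overlap $i\ge 1$ the number of good pairs is $O(n^{2r-i})$ and the joint probability is $O(p^{2j_0-j(S\cap T)})$, so a case-by-case check paralleling the classical $K_r$-in-$G(n,p)$ analysis shows the total covariance is $o(\Exp[X]^2)$; Chebyshev then yields $X>0$ \whp. I expect the main obstacle to be the supersaturation step: producing the count $\Omega(n^r)$ uniformly over $\cG(n,d)$ rather than merely finding a single blow-up via Erd\H{o}s--Stone, and verifying that the exponent matches $n^{2-2/(k-1)}$ exactly. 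A secondary technical concern is controlling the variance when good sets are highly clustered (as they are for the extremal graph $T_{r_0}(n)$), which may require switching from Chebyshev to Janson's inequality.
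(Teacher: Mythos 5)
Your part~\ref{KrNec} argument is essentially the paper's: the same $H_0$ (balanced complete $r_0$-partite graph), the same pigeonhole reduction to a $K_{\lceil r/r_0\rceil}$ of random edges inside one part, and a first-moment bound in place of the cited Bollob\'as $0$-statement; this is fine (modulo the degenerate case $r\le 2r_0$, where the exponent is $0$ and no random edges are being added, which the paper dispatches explicitly).

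For part~\ref{KrSuf} you take a genuinely different route (a global second moment over all ``good'' $r$-sets, versus the paper's sequential argument), and here there is a real gap in the variance step. First, your stated bound on the joint probability is incorrect: for two good sets the probability that both become cliques is $(1+o(1))p^{\,j(S)+j(T)-j(S\cap T)}$, which is much larger than $O(p^{2j_0-j(S\cap T)})$ whenever $j(S),j(T)<j_0$; the family $\{S: j(S)\le j_0\}$ only satisfies an upper bound on $j$, so this happens. More substantively, because the missing-edge graphs of good sets vary with $S$ (they need not be unions of cliques of order at most $k$, only graphs with at most $j_0$ edges), there is no single fixed graph whose subgraph densities control $\V[X]/\E[X]^2$, and ``paralleling the classical $K_r$-in-$G(n,p)$ analysis'' is not available: for graphs $H\in\cG(n,d)$ with planted dense gadgets or large cliques, $\E[X]$ can be dominated by clustered good sets whose missing edges sit on a shared core, exactly the situation you flag at the end but do not resolve. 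The natural repair stays inside your framework: use Proposition~\ref{subprop} to pass to the regularity subgraph $H'$ (so intra-cluster pairs are genuine non-edges), and count only $r$-sets taking $a_i$ vertices in the clusters $A_1,\dots,A_{r_0}$ of the reduced-graph $K_{r_0}$ with \emph{all} cross pairs being edges of $H'$. Then every counted $S$ has $j(S)=j_0$ exactly, its missing graph is the fixed disjoint union of cliques of orders $a_i\le k$, every subgraph of that graph has density at most $(k-1)/2$, and the standard computation (Chebyshev, or Janson if you prefer) goes through at $p=\omega\bigl(n^{-2/(k-1)}\bigr)$, with the counting lemma supplying the $\Omega(n^r)$ lower bound on the restricted family. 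As written, however, the proof of~\ref{KrSuf} is incomplete.

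For comparison, the paper avoids the second moment entirely: after locating the $K_{r_0}$ of clusters it splits $R$ into $r_0$ phases, uses the $G(n,M)$ result to get $\omega(1)$ copies of $K_{\lceil r/r_0\rceil}$ of random edges inside the current cluster, and invokes an ``Intersection Property'' of $\e$-regular pairs to choose one copy whose vertices have large common neighborhood in all later clusters, then recurses inside that common neighborhood. This sequential embedding sidesteps all variance/clustering issues at the cost of the extra regularity bookkeeping; your route, once restricted to the structured family, is a legitimate alternative and arguably closer to the classical threshold computation.
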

Note that if \( r \le 2 r_0 \) then only \( \omega(1) \) random
edges are needed to create a \(K_r\).

For \( d \le 1/2\) the minimum value of the
chromatic number of \( G_{H,m} \) taken over
\( H \in \cG(n,d) \) is largely
determined by \( \chi( G_{n,m} ) \)
(the latter is well-studied -
see, for example, \cite{F1} and \cite{L1}).
This observation follows from the simple bound
\( \chi( G_{H,m} ) \ge \chi( G_{n,m}) \) and the fact
that
\( \chi( G_{K_{n/2,n/2},m}) \)
is at most twice
\( \chi( G_{n/2,m} ) \).
In one situation
Theorem~\ref{Kr} strengthens this lower bound: if
$m = \omega(1)$ then \whp\ \(G_{H,m}\) contains a $K_4$ and
$ \chi(G_{H,m}) \ge 4$.

We now turn to the diameter of \(G_{H,m}\), which we denote
$\diam(G_{H,m})$.
If $d\geq 1/2$ then
$\diam(H) =2 $, and all remaining edges are required to
achieve diameter 1.  We first show that only a small set of random
edges is required to get \( \diam(G_{H,m}) \le 5 \).
\begin{theorem}
   If $H\in\cG(n,d)$ and
   $m=\omega(1)$ then \whp\ $\diam(G_{H,m})\leq 5$. \label{lbdiam5}
\end{theorem}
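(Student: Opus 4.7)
Fix arbitrary $u,v\in V$.  Since $H$ has minimum degree at least $dn$, each closed second neighborhood $B_2^H(w):=\{x:\dist_H(w,x)\le 2\}$ satisfies $|B_2^H(w)|\ge dn+1$.  For the pair $(u,v)$ I consider three cases:  if $B_2^H(u)\cap B_2^H(v)\ne\emptyset$ then $\dist_H(u,v)\le 4$; if there is an edge of $H$ between the two balls then $\dist_H(u,v)\le 5$; otherwise every one of the $\ge (dn+1)^2$ pairs in the rectangle $B_2^H(u)\times B_2^H(v)$ is a non-edge of $H$, and a single random edge landing there delivers a path $u\rightsquigarrow a$–$b\rightsquigarrow v$ of length at most $2+1+2=5$ in $G_{H,m}$.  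In this last case the probability that none of the $m$ random edges falls in the rectangle is at most $(1-2d^2)^m=o(1)$ for $m=\omega(1)$, so the per-pair failure probability is $o(1)$.

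The \emph{principal obstacle} is promoting this per-pair statement to a uniform bound over all $\binom{n}{2}$ pairs; a direct union bound would require $m=\Omega(\log n)$, stronger than $\omega(1)$.  My plan to circumvent this is to reduce to a bounded number of representative pairs.  Using the minimum-degree hypothesis one greedily selects vertices $v_1,\ldots,v_K$ with $K=O(1/d)$ such that $V=\bigcup_i B_2^H(v_i)$ (each new ball has size $\ge dn+1$, so the greedy procedure terminates in $O(1/d)$ steps), or alternatively a dominating set in which every vertex is adjacent to some $v_i$.  The single-pair analysis is then applied only to the $\binom{K}{2}=O(1)$ pairs of centers, and the union bound over this constant-size family is absorbed harmlessly by $m=\omega(1)$: whp for every pair of centers there is either an $H$-edge or a random edge between the corresponding neighborhoods.

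The subtlest step, which I expect to be the technical heart of the argument, is arranging the cover so that the tight bound $5$ (rather than some larger constant) is obtained for arbitrary $u,v$.  Naively routing $u\rightsquigarrow v_i\rightsquigarrow\text{(bridge edge)}\rightsquigarrow v_j\rightsquigarrow v$ through $B_2$-cover centers only yields $\dist\le 2+5+2=9$; to get $\dist\le 5$ one needs each side of the path to cost at most $2$, which forces the bridge edge to lie in $B_1^H(v_i)\times B_1^H(v_j)$ for a \emph{dominating} cover, giving $\dist(u,v)\le 1+3+1=5$.  Balancing the two competing requirements—keeping the number of centers $K$ small enough that the union bound over $O(K^2)$ representative pairs is tolerated by $m=\omega(1)$, while keeping the cover fine enough for the distance-$5$ routing—is the delicate technical point, and the place where the structure of $\cG(n,d)$ (beyond the general Alon bound $O(\log n/d)$ on the dominating number) must be exploited.
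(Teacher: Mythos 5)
There is a genuine gap, and it sits exactly where you flagged it: the reduction from all $\binom{n}{2}$ pairs to a bounded family of representative pairs is never actually achieved. Your two candidate reductions do not work as stated. A constant-size dominating set need not exist for $H\in\cG(n,d)$: minimum degree $dn$ only guarantees domination number $O(\log n/d)$, and this is tight (a typical dense random graph has minimum degree about $dn$ and domination number $\Theta(\log n)$), so the $1+3+1$ routing is unavailable, and a union bound over $\Theta(\log^2 n)$ pairs of dominating vertices would again force $m=\Omega(\log n)$ rather than $\omega(1)$. The radius-$2$ greedy cover by $O(1/d)$ balls $B_2^H(v_i)$ is legitimate (centers at pairwise distance $\ge 3$ have disjoint closed neighborhoods), but routing $u\to v_i\to a$--$b\to v_j\to v$ through a bridge edge between $N(v_i)$ and $N(v_j)$ costs $2+3+2=7$, so as far as your argument is rigorous it proves $\diam(G_{H,m})\le 7$ \whp, not $\le 5$. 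The further difficulty you did not address is that even for a fixed pair of ``representatives'' the relevant random edge must be usable by \emph{every} vertex $u$ assigned to that representative, and different such $u$ have different (and possibly disjoint) neighborhoods; a single random edge placed for the pair of centers only serves vertices close to those centers.

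The paper resolves precisely this point with Szemer\'edi's Regularity Lemma. The clusters of the regular partition play the role of your constant-size family: every vertex $x$ has at least $(d-\d-2\e)L$ neighbors in some cluster $V_x$, and one fixes a cluster $V_x'$ with $(V_x,V_x')$ regular and dense. Randomness is then spent only on the boundedly many quadruples $(V_u,V_u',V_w',V_w)$: the $\omega(1)$ random edges between $V_u'$ and $V_w'$ are grouped into $k$-links, i.e.\ $k^2$ edges whose endpoints, by the Union Property of $\e$-regular pairs, have neighborhoods covering all but a $(d-\d-2\e)$-fraction of $V_u$ and $V_w$. This coverage is what lets an \emph{arbitrary} vertex $u$ (whose neighborhood occupies only a constant fraction of $V_u$) reach an endpoint of the link in two steps, producing a path $u,x_j,a_{j,\sigma(j)},b_{j,\sigma(j)},y,w$ of length $5$, with the union bound taken only over the bounded number of cluster quadruples. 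Your plan correctly identifies the obstacle but stops short of the idea (regularity plus the $k$-link coverage device) needed to overcome it, so the proof is incomplete.
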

\noindent
Note that the number of edges required here is independent of \(d\).
By way of comparison, we note that Bollob\'as and Chung showed that
\whp\ the diameter of a graph consisting of a random matching added
to an \(n\)-cycle is about \( \log_2n \) \cite{BC}.
Next we show that in order to get
\( \diam(G_{H,m}) < 5 \) roughly \( \log n \) random edges are
required.
\begin{theorem} \
   \begin{enumerate}
   \item  Let \( H \) be a graph in $\cG(n,d)$.
   If $ m = \frac{ 1-d}{d^2} \log n + \omega(1)$ then
   \whp\ $\diam(G_{H,m})\leq 3$.
   \label{diam3Suf}
   \item If $d<1/2$ then there exists a graph $H_0 \in \cG(n,d)$
   such that if $m = \frac{ \log n }{ -2 \log(1-2d) }  - \omega(1) $
   then \whp\ $\diam(G_{H_0,m})\geq 5$. \label{diam3Nec}
   \end{enumerate}
\label{diam3}
\end{theorem}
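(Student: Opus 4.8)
The plan is to use the probabilistic method: I will construct an explicit $H_0\in\cG(n,d)$ together with a large family $\F$ of pairs of vertices at distance $\ge 5$ in $H_0$, and show that when $m\le\frac{\log n}{-2\log(1-2d)}-\omega(1)$, whp at least one pair of $\F$ is still at distance $\ge 5$ in $G_{H_0,m}$. Since adding edges cannot increase distances, it suffices to analyse, for each $(u,v)\in\F$, the ways in which the $m$ random edges can create a $u$--$v$ path of length at most $4$; call $(u,v)$ \emph{good} (for a given outcome of $R$) if no such path arises, so that $\diam(G_{H_0,m})\ge 5$ as soon as some $(u,v)\in\F$ is good.

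For the construction I would take $H_0$ to be an explicit dense graph of large diameter in which the minimum-degree constraint is tight: roughly, $H_0$ consists of a few large, internally very dense blocks arranged so that two designated blocks $A,B$ --- each of size about $dn$, the smallest a block can be if every vertex is to have degree $\ge dn$ --- are far apart, with the remaining $\approx(1-2d)n$ vertices placed so as to keep the number $N=\binom n2-e(H_0)$ of non-edges as large as possible. The family $\F$ consists of the pairs $(u,v)$ with $u\in A$, $v\in B$, of which there are $\Theta(n^2)$. The structural core is the claim that, up to an event of probability $o(1)$, a pair $(u,v)\in\F$ is good unless some random edge lies in a \emph{danger set} $D=D(u,v)$ --- essentially the non-edges having an endpoint in $N_{H_0}(u)\cup N_{H_0}(v)$, a set spanning at most $2dn$ vertices. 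Computing the density of $D$ among the $N$ non-edges for the extremal $H_0$ gives $|D|/N=1-(1-2d)^2+o(1)$, so that each pair of $\F$ is good with probability $\bigl(1-|D|/N\bigr)^m=(1-2d)^{(2+o(1))m}$; the hypothesis $d<1/2$ enters here through $1-2d>0$.

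Let $X$ count the good pairs in $\F$. The first moment is $\Exp[X]=|\F|\cdot(1-2d)^{(2+o(1))m}$, which already tends to infinity for $m$ as large as roughly $\frac{\log n}{-\log(1-2d)}$ --- twice the claimed bound --- so the binding constraint must come from the second moment. Expanding $\Exp[X^2]$ over ordered pairs in $\F\times\F$, the terms where the two danger sets are nearly disjoint sum to $(1+o(1))\Exp[X]^2$, while the potentially harmful terms come from pairs sharing the block $A$ or $B$: there $D(u,v)$ and $D(u',v')$ differ in only $O(n)$ non-edges, the two events are strongly positively correlated, and the correlation factor is of order $(1-2d)^{-2m}$, the reciprocal of the single-pair probability. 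Since only $O(n)$ elements of $\F$ share a block with a fixed pair, keeping this contribution $o(\Exp[X]^2)$ requires $\Exp[X]\gg n$, i.e.\ $n\,(1-2d)^{2m}\to\infty$, which is exactly $m\le\frac{\log n}{-2\log(1-2d)}-\omega(1)$. Granting $\Exp[X^2]=(1+o(1))\Exp[X]^2$, Chebyshev's inequality gives $\Pr(X>0)\to 1$, and on $\{X>0\}$ we have $\diam(G_{H_0,m})\ge 5$.

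\textbf{Main difficulty.} The real work is selecting $H_0$ and verifying the two numerical facts responsible for the constant $-2\log(1-2d)$: that the danger set has density $1-(1-2d)^2$ (so the single-pair probability is $(1-2d)^{2m}$), and that in the second moment the dominant correlated contribution is the ``same-block'' one, of weight $O(n)$ and correlation factor $(1-2d)^{-2m}$. Getting these to match --- in particular the placement and internal structure of the $(1-2d)n$ ``extra'' vertices, and the routine-but-tedious check that a length-$\le 4$ path must indeed hit $D(u,v)$ --- is the delicate part.
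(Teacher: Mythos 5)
Your proposal only addresses part (b); part (a) (that $m=\frac{1-d}{d^2}\log n+\omega(1)$ random edges force diameter $\le 3$) is never touched, and it is half the theorem. In the paper this is a short union bound: working with $G_{H,p}$, $p=\frac{2(\log n+\omega(1))}{d^2n^2}$, the probability that some pair $u,v$ with disjoint neighborhoods gets no random edge between $N(u)$ and $N(v)$ is at most $\binom n2(1-p)^{d^2n^2}=o(1)$.

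For part (b) there is a more serious structural problem with your construction. You take the designated blocks $A,B$ of size about $dn$ and ``internally very dense''; but if $|A|\approx dn$ and every vertex of $A$ has degree $\ge dn$ essentially inside $A$, then $A$ is forced to be nearly complete, so $N[u]\approx A$ for \emph{every} $u\in A$ (and likewise for $B$). All your events ``no random edge hits $D(u,v)$'' then coincide with the single event ``$R$ avoids $A\cup B$'', whose probability is $(1-\Theta(1))^{m}=o(1)$ for $m=\Theta(\log n)$: the second moment method has nothing to work with, and $\Pr(X>0)\to 0$. The approach needs blocks of \emph{linear} size with internal density about $2d$, so that distinct vertices have nearly independent neighborhoods of size $\approx dn$; this is exactly the paper's $H$, a disjoint union of $G_{\lfloor n/2\rfloor,p}$ and $G_{\lceil n/2\rceil,p}$ with $p=2d+n^{-1/3}$ (for which, incidentally, your claimed danger density $1-(1-2d)^2$ is correct, whereas it is not for your size-$dn$ blocks, since $R$ is drawn from the non-edges of a graph with $\Theta(n^2)$ edges). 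Even with such a pseudorandom fixed $H_0$, your second-moment bookkeeping is off: the harmful correlations are not confined to the $O(n)$ pairs sharing a coordinate, because any two danger sets $N[u]\cup N[v]$, $N[u']\cup N[v']$ overlap in $\Theta(d^2n)$ vertices, giving a correlation factor of order $e^{\Theta(d^2m)}=n^{\Theta(d)}$ for a typical pair of pairs, so $\Exp[X^2]=(1+o(1))\Exp[X]^2$ does not follow without a delicate codegree-level analysis. The paper avoids all of this by reversing the quantifiers: fix an arbitrary $R$ (covering $r\le 2m$ vertices $S'$ per side) and let $H$ be random; then the events ``$N_H(u)\cap S'=\emptyset$'', $u\in S\setminus S'$, are genuinely independent, the failure probability is $(1-(1-p)^r)^{n/2-r}=o(1)$ precisely when $m\le\frac{\log n}{-2\log(1-2d)}-\omega(1)$, and an averaging argument (plus Chernoff for the degree condition) yields a fixed $H_0$ that works whp over the random choice of $R$; the untouched closed neighborhoods of $u\in S$ and $v\in T$, lying in different components of $H_0$, force $\dist(u,v)\ge 5$. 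As it stands, your proof has a gap both in the choice of $H_0$ and in the variance estimate.
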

\noindent
Finally, we prove that about \( n \log n \) random edges are required to
get \( \diam(G) \le 2 \).
\begin{theorem}\
   \begin{enumerate}
   \item
   Let \( H \) be a graph in $\cG(n,d)$.
   If $ m = \frac{1-d}{d} n \log n + \omega(n)$ then
   \whp\ $\diam(G_{H,m})\leq 2$.
   \label{diam2Suf}
   \item  There exists a graph $H_0 \in\cG(n,1/2)$
   such that if $m = \frac{1}{2} n \log n - \omega(n) $
   then \whp\ $\diam(G_{H_0,m})\geq 3$. \label{diam2Nec}
   \end{enumerate}
\label{diam2}
\end{theorem}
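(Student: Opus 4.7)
I will use a first-moment / union bound. Fix a pair $u \neq v$ of vertices in $H$. If $uv \in E(H)$ or if $u, v$ share a common neighbor in $H$, then $d_{G_{H,m}}(u,v) \le 2$ deterministically. Otherwise $N_H(u) \cap N_H(v) = \emptyset$, and the edge set
\[
S_{u,v} \;=\; \{vw : w \in N_H(u)\}
\]
consists of $|N_H(u)| \ge dn$ distinct non-edges of $H$; moreover, the pair achieves distance $2$ in $G_{H,m}$ as soon as $S_{u,v} \cap R \neq \emptyset$, since any $vw \in R$ with $w \in N_H(u)$ produces a common neighbor. Letting $N = \binom{n}{2} - e(H) \le (1-d)n^2/2$ (from the minimum degree bound $e(H) \ge dn^2/2$),
\[
\Pr[S_{u,v} \cap R = \emptyset] \;=\; \frac{\binom{N - |S_{u,v}|}{m}}{\binom{N}{m}} \;\le\; \exp\!\left(-\frac{|S_{u,v}|\,m}{N}\right) \;\le\; \exp\!\left(-\frac{2dm}{(1-d)n}\right).
\]
For $m = \frac{1-d}{d} n \log n + \omega(n)$ the exponent is $-2\log n - \omega(1)$, giving probability $o(n^{-2})$. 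A union bound over the $\binom{n}{2}$ pairs yields total failure probability $o(1)$, so $\diam(G_{H,m}) \le 2$ \whp.

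\textbf{Part (b).} I take $H_0$ to be the disjoint union $K_{n/2} \cup K_{n/2}$ of two cliques on $n/2$ vertices (after a lower-order adjustment to meet the minimum-degree requirement of $\cG(n,1/2)$), with parts $C_1, C_2$. The set of non-edges has size $N = (n/2)^2$, and a short case check shows that a cross-pair $(u,v) \in C_1 \times C_2$ has distance $\le 2$ in $G_{H_0,m}$ iff $R$ meets
\[
S_{u,v} \;=\; \{uv\} \cup \{vw : w \in C_1 \setminus \{u\}\} \cup \{uw : w \in C_2 \setminus \{v\}\},
\]
a set of $n-1$ non-edges (if none of these lies in $R$ then $u$ and $v$ share no neighbor in $G_{H_0,m}$). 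Writing $X$ for the number of cross-pairs with $S_{u,v} \cap R = \emptyset$,
\[
\Ex[X] \;=\; \frac{n^2}{4} \cdot \frac{\binom{N - (n-1)}{m}}{\binom{N}{m}} \;=\; \frac{n^2}{4}\,\exp\!\left(-\frac{4m}{n}\bigl(1+o(1)\bigr)\right),
\]
which diverges precisely for $m = \frac{1}{2} n \log n - \omega(n)$. I finish via Chebyshev. A disjoint pair of cross-pairs $(u_1,v_1), (u_2,v_2)$ has $|S_{u_1,v_1} \cap S_{u_2,v_2}| = |\{u_1 v_2, u_2 v_1\}| = 2$, so the joint bad-probability is $(1+o(1))\Pr[\text{bad}]^2$ and the disjoint class contributes $(1+o(1))\Ex[X]^2$ to $\Ex[X^2]$. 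Pairs sharing exactly one endpoint satisfy $|S_{u_1,v_1} \cup S_{u_2,v_2}| = 3n/2 - 2$, giving joint bad-probability of order $\exp(-6m/n)$; with $O(n^3)$ such pairs and the parametrization $m = \tfrac{1}{2}n\log n - c(n)n$ with $c(n)\to\infty$, their total contribution is $O(e^{6c(n)}) = e^{-2c(n)} \cdot O(\Ex[X]^2) = o(\Ex[X]^2)$. Hence $\V[X] = o(\Ex[X]^2)$, so $X \ge 1$ \whp{} and $\diam(G_{H_0,m}) \ge 3$.

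\textbf{Main obstacle.} The delicate step is the second-moment bookkeeping in part (b): one must enumerate the overlap patterns between the sets $S_{u_1,v_1}, S_{u_2,v_2}$ for two cross-pairs and check that only the vertex-disjoint class contributes at leading order, while each overlapping class contributes strictly $o(\Ex[X]^2)$. A minor technicality is that $K_{n/2} \cup K_{n/2}$ has minimum degree $n/2 - 1$, so placing the construction formally in $\cG(n, 1/2)$ requires either a negligible adjustment (adding carefully chosen edges without changing the count up to lower order) or a limiting interpretation of the minimum-degree bound.
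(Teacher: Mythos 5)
Your proposal is correct, and part (a) is essentially the paper's argument: the paper invokes its Remark on the $G_{H,p}$ model and bounds $\binom{n}{2}(1-p)^{dn}$, while you run the same union bound directly in the uniform $m$-edge model with the hypergeometric estimate $\binom{N-s}{m}/\binom{N}{m}\le e^{-sm/N}$; the difference is cosmetic. In part (b) you use the paper's construction (two disjoint cliques) but a heavier probabilistic tool. Note that your bad event $S_{u,v}\cap R=\emptyset$ is exactly the event that \emph{both} $u$ and $v$ are incident to no edge of $R$ (every random edge at $u$ is a cross edge and hence lies in $S_{u,v}$), so $X$ factorizes as the product of the numbers of untouched vertices on the two sides. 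The paper exploits precisely this: it simply bounds the probability that $R$ touches every vertex of one side by $\bigl(1-(1-2/n)^m\bigr)^{n/2}\le\exp\bigl\{-\tfrac n2 e^{-2m/n-4m/n^2}\bigr\}=o(1)$, using negative correlation of the covering events, and thereby avoids any variance computation. Your Chebyshev route is also valid — your overlap bookkeeping is right ($|S_1\cap S_2|=2$ for disjoint cross-pairs, $|S_1\cup S_2|=3n/2-2$ for pairs sharing an endpoint, and the shared-endpoint class contributes $O(e^{-2c(n)})\Ex[X]^2$) — but it is more work for the same conclusion. Two small points: your expression $\Ex[X]=\tfrac{n^2}{4}\exp\bigl(-\tfrac{4m}{n}(1+o(1))\bigr)$ is too lossy as written, since a multiplicative $o(1)$ on an exponent of order $\log n$ could swallow the gain $4c(n)$; the standard expansion actually gives an \emph{additive} $o(1)$ error in the exponent (because $m^2 s/N^2$ and $s^2m/N^2$ are $o(1)$ here), which is what you need and what your computation implicitly uses. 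Finally, the minimum-degree caveat is shared with the paper (its $H_0$ also has degree $n/2-1$), but your proposed fix of "adding carefully chosen edges" cannot work: any graph with minimum degree at least $n/2$ automatically has diameter at most $2$, so every vertex acquiring a cross edge destroys the example; the only sensible reading of $\cG(n,1/2)$ in part (b) is the relaxed one the paper tacitly adopts.
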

\noindent
Theorems~\ref{lbdiam5},~\ref{diam3},~and~\ref{diam2}
establish intervals for \(m\) in which the worst
case diameter of \( G_{H,m} \) is 5, 3 and 2.
Note that there is, in a sense, a
jump in the worst case diameter from 5 to 3.

The final property we consider is vertex connectivity.  First
note that if $d>1/2$ then
\( H \in \cG(n,d) \) is at least $(2\lceil dn\rceil-n+2)$-connected.
This can be seen by the fact that, by
removing any set of size $2\lceil
dn\rceil-n$, the resulting graph has minimum degree at least half the
number of vertices.  This means that the resulting graph has a
Hamilton cycle and is, thus, at least 2-connected.
Since there are graphs $H$ which are disconnected but have minimum
degree $dn$ ($d<1/2$), we focus on the number of
random edges required to make $G_{H,m}$ $k$-connected for $k\le cn$, for
some constant $c=c(d)>0$.
\begin{theorem}
%   Let
%   $k=o(n)$.
   $\,$
   \begin{enumerate} \item Let \(H \in \cG(n,d)\). If $k=O(1)$ and
   $m=\omega(1)$ then $G_{H,m}$ is
   $k$-connected, \whp. If $\omega(1)\le k\le d^2n/32$ and $m=640k/d^2$
   then $G_{H,m}$ is $k$-connected, \whp. \label{connSuf}
   \item If \( d < 1/2 \) then there exists
   an $H_0 \in \cG(n,d)$ such that \whp\ $G_{H_0,m}$ fails
   to be $k$-connected for all \(k\) such that
   $m\le \frac{k}{2}\lfloor\frac{n}{dn+1}\rfloor$. \label{connNec}
   \end{enumerate}
   \label{connthm}
\end{theorem}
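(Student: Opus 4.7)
The theorem has two halves that require distinct arguments, and the plan is to treat them separately.

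For part 2 (lower bound), the construction is $H_0$ as a disjoint union of cliques. Set $t=\lfloor n/(dn+1)\rfloor$ and partition $V$ into $t$ groups $C_1,\dots,C_t$ of sizes at least $dn+1$ (absorbing any remainder into one group); take $H_0$ to be the union of the cliques on these groups. Then $H_0\in\cG(n,d)$ and every non-edge of $H_0$, hence every random edge in $R$, lies between two distinct cliques. Writing $r_i$ for the number of random edges incident to $C_i$, the identity $\sum_i r_i=2m$ will force some $r_{i^{\ast}}\le 2m/t$; removing the (at most $r_{i^{\ast}}$) endpoints of these edges that lie outside $C_{i^{\ast}}$ separates $C_{i^{\ast}}$ from the rest of $G_{H_0,m}$. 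This shows that the vertex connectivity of $G_{H_0,m}$ is at most $2m/t$, yielding the claim.

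For part 1 (upper bound), the plan is a union bound over candidate small cuts. Call $(A,B,S)$ a \emph{bad triple} if $V=A\cup B\cup S$ is a disjoint union, $A,B$ are nonempty, $|S|<k$, and no $G_{H,m}$-edge joins $A$ and $B$. Since $H$ has minimum degree at least $dn$ and every vertex of $A$ has all its $H$-neighbours in $A\cup S$, we obtain $|A|,|B|\ge dn-|S|+1$; in particular each component of $H\setminus S$ has size at least $dn-|S|+1$, so $H\setminus S$ has at most $2/d$ components and there are at most $2^{2/d}$ partitions of them into $(A,B)$ per fixed $S$. Because the $|A||B|$ pairs in $A\times B$ are non-edges of $H$, a uniformly random non-edge lies in $A\times B$ with probability at least $|A||B|/\binom{n}{2}\ge d^2/2$, so the probability that all $m$ random edges miss $A\times B$ is at most $e^{-d^2m/2}$. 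For the first subclaim ($k=O(1)$, $m=\omega(1)$), the number of bad triples is $O(n^{k-1})\cdot 2^{2/d}$ and the union bound succeeds once $m$ grows fast enough relative to the fixed $k$. For the second subclaim ($\omega(1)\le k\le d^2n/32$, $m=640k/d^2$), one bounds the count by
\[ \sum_{s=0}^{k-1}\binom{n}{s}\cdot 2^{2/d}\le \Bigl(\frac{en}{k}\Bigr)^{k}\cdot 2^{2/d+1} \]
and combines with $e^{-d^2m/2}=e^{-320k}$; the ceiling $k\le d^2n/32$ is what keeps the exponent $k\log(en/k)$ within reach of $320k$, so that the generous constant $640$ makes the product $o(1)$.

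The main obstacle will be the calibration in the second subclaim: the single choice $m=640k/d^2$ must defeat every bad triple throughout the range $\omega(1)\le k\le d^2n/32$, and the interplay between the growth rate of $k$, the smallness of $d$, and the constant $640$ requires careful bookkeeping to verify. In particular, one has to check that the exponential gain $e^{-320k}$ always beats both the binomial factor $(en/k)^k$ and the partition factor $2^{2/d+1}$ uniformly across the entire range, and tuning the constant $640$ is what makes this possible.
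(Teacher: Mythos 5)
Your part (b) is exactly the paper's construction and argument (disjoint cliques of size $dn+1$, counting random edges incident to each clique), so that half is fine. Part (a), however, has a genuine gap: a union bound over cutsets cannot deliver the stated values of $m$. The number of candidate triples is at least the number of choices of $S$, namely $\sum_{s<k}\binom{n}{s}\approx (en/k)^k$, while the probability gain per triple is only $e^{-d^2m/2}=e^{-320k}$ when $m=640k/d^2$; the product is $\exp\{k(\ln(en/k)-320)\}$, which tends to infinity whenever $n/k$ exceeds the constant $e^{319}$ --- in particular for every $k$ in the range $\omega(1)\le k\le n^{1-\epsilon}$, e.g.\ $k=\log n$. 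The ceiling $k\le d^2n/32$ bounds $k$ from above and does nothing to exclude this small-$k$ regime, so no tuning of the constant $640$ can make the bookkeeping work; your method intrinsically needs $m=\Omega\bigl(k\log(n/k)/d^2\bigr)$. The same defect kills the first subclaim: $O(n^{k-1})$ triples against $e^{-d^2m/2}$ forces $m=\Omega(\log n)$, whereas the theorem assumes only $m=\omega(1)$ (your caveat ``once $m$ grows fast enough'' is an admission that you are proving a weaker statement). The loss comes from enumerating $S$ at all: the bad event depends only on the pair $(A,B)$, and the events attached to the $\binom{n}{k-1}$ different sets $S$ are essentially one and the same event, so the straight union bound overcounts hopelessly.

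The paper circumvents this with Lemma~\ref{part_con} (proved via Mader's theorem): $V$ is partitioned into at most $8/d$ parts, each $d^2n/16$-connected, so a cut of size $<k\le d^2n/32$ cannot split a part, and $k$-connectivity reduces to showing that $R$ contains a matching of size $k$ between each of the $O(1)$ pairs of parts (more precisely between two sets of size $3dn/32$ with no $H$-edges between them). The union bound then carries only constant entropy, and the probability that the $\Theta(k)$ random edges falling between two linear-size sets fail to contain a $k$-matching is $e^{-\Omega(k)}$; the case $k=O(1)$, $m=\omega(1)$ is handled by observing that $\omega(1)$ random edges between the two sets \whp\ already form a matching. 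To repair your proof you need some reduction of this kind --- converting small cuts into small matchings between a bounded family of canonical linear-size sets --- rather than a direct enumeration of cuts.
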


The remainder of the paper is organized as follows.  In the
next section we establish some preliminaries that will be
used in the proofs.  Theorem~\ref{Kr} is proved in Section~3.  In
Section~4 we prove Theorems~\ref{lbdiam5},~\ref{diam3}~and~\ref{diam2}.
Finally, we prove Theorem~\ref{connthm} in Section~5.

\begin{remark}\label{rem1}
All of the properties considered here are monotone increasing.
We therefore have an alternative way of adding random edges.
We can take each non-edge of $H$ and add it with probability $p$, creating the random graph
$G_{H,p}$ where $\brac{\binom{n}{2}-|E(H)|}p\leq m$. It then follows, as in Bollob\'as \cite{B0},
Theorem 2.2, that for a monotone increasing property $\cP$,
$G_{H,p}\in\cP$ \whp\ implies $G_{H,m}\in\cP$ \whp.
This observation can sometimes simplify our calculations.
\end{remark}

\section{Preliminaries}

We will
use the powerful Regularity Lemma of Szemer\'edi, which was formally
introduced in ~\cite{Sz}.  A survey of
applications of the Regularity Lemma is given in
in ~\cite{KS}.  In order to state the Lemma, we need a
definition.  For two
disjoint vertex sets $A$ and $B$, let $e(A,B)$ denote the number of
edges with one vertex in $A$ and the other in $B$.  Also, let
$d(A,B)=\frac{e(A,B)}{|A||B|}$.
\begin{definition} [Regularity condition]
Let $\epsilon>0$.  Given a graph $G$ and two disjoint vertex sets
$A\subset V$, $B\subset V$, we say that the pair $(A,B)$ is
$\epsilon$-regular if for every $X\subset A$ and $Y\subset B$
satisfying
\[ |X|>\epsilon |A|\qquad\mbox{and}\qquad |Y|>\epsilon |B| \]
we have
\[ \left|d(X,Y)-d(A,B)\right|<\epsilon . \]
\end{definition}

Now we can state the Regularity Lemma itself.  We will not need the
classical form but instead a modification, known as the Degree Form.
\begin{theorem}[Regularity Lemma, Degree Form]
For every $\epsilon>0$ there is an $M=M(\epsilon)$ such that if
$G=(V,E)$ is any graph and $\delta\in [0,1]$ is any real number, then
there is a partition of the vertex-set $V$ into $\ell+1$ clusters
$V_0,V_1,\ldots,V_{\ell}$, and there is a spanning subgraph $G'\subset G$ with
the following properties:
\begin{itemize}
\item $\ell\leq M$,
\item $|V_0|\leq\epsilon |V|$,
\item all clusters $V_i$, $i\geq 1$, are of the same size
$L\leq\lceil\epsilon |V|\rceil$,
\item $\deg_{G'}(v)>\deg_G(v)-(\delta+\epsilon)|V|$, for all $v\in V$,
\item $e(G'[V_i])=0$, for all $i\geq 1$,
\item {\bf all} pairs $G'(V_i,V_j)$, ($1\leq i<j\leq \ell$) are
$\epsilon$-regular, each with a density either $0$ or greater than
$\delta$.
\end{itemize}
\label{reglem}
\end{theorem}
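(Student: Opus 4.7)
The plan is to deduce the Degree Form as a corollary of the classical Szemer\'edi Regularity Lemma, which produces an equitable partition $V_0',V_1',\dots,V_{\ell'}$ with $|V_0'|\le \e_1 |V|$, $|V_1'|=\cdots=|V_{\ell'}'|=L$, $\ell'\le M_0(\e_1)$, and at most $\e_1 \binom{\ell'}{2}$ irregular pairs, for any prescribed $\e_1>0$. I would apply this with a small auxiliary parameter $\e_1$ (ultimately $\e_1 \le \e^2/16$ so that $\e_1+\sqrt{\e_1}<\e$).

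From the output of the classical lemma I would build the spanning subgraph $G'\subseteq G$ by deleting every edge of $G$ that falls into one of the following categories: (a) incident to $V_0'$; (b) lying inside some cluster $V_i'$, $i\ge 1$; (c) lying in an $\e_1$-irregular pair $(V_i',V_j')$; (d) lying in a pair $(V_i',V_j')$ of density at most $\delta$. By construction, the surviving pairs are $\e_1$-regular (hence $\e$-regular, since $\e_1<\e$) and have density either $0$ (because all their edges were deleted) or strictly greater than $\delta$, and moreover $e(G'[V_i'])=0$ for $i\ge 1$. So all of the structural properties are immediate; the only nontrivial item is the per-vertex bound $\deg_{G'}(v)>\deg_G(v)-(\delta+\e)|V|$.

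The main obstacle is exactly this uniform degree condition, because the classical lemma only controls the number of irregular pairs \emph{globally}. To get a pointwise bound I would quarantine the clusters that are in too many irregular pairs. A double counting shows that the total number of (cluster, irregular-pair) incidences is at most $2\e_1\binom{\ell'}{2}\le \e_1\ell'^2$, so at most $\sqrt{\e_1}\,\ell'$ clusters can participate in more than $\sqrt{\e_1}\,\ell'$ irregular pairs. I would declare such clusters \emph{bad}, dump all their vertices into the exceptional set, and take the final $V_0$ to be $V_0'$ together with these vertices. This enlarges the exceptional set by at most $\sqrt{\e_1}\,\ell'\cdot L\le \sqrt{\e_1}|V|$, so $|V_0|\le(\e_1+\sqrt{\e_1})|V|\le \e|V|$; the remaining clusters are renumbered $V_1,\dots,V_\ell$.

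For any $v\in V_i$ with $i\ge 1$ the total number of edges deleted at $v$ is bounded by summing four terms: at most $|V_0|\le\e|V|/2$ from (a); at most $L-1\le \lceil\e_1|V|\rceil$ from (b); at most $\sqrt{\e_1}\,\ell'\cdot L\le \sqrt{\e_1}|V|$ from (c), using that $V_i$ is now a good cluster; and at most $\ell'\cdot \delta L\le \delta|V|$ from (d), since each surviving low-density pair incident to $V_i$ contributes at most $\delta L$ edges to $v$. Choosing $\e_1$ small enough that $\e/2+\e_1+\sqrt{\e_1}<\e$ yields $\deg_G(v)-\deg_{G'}(v)<(\delta+\e)|V|$, as required. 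Setting $M(\e):=M_0(\e_1(\e))$ completes the derivation. The only subtlety beyond bookkeeping is the quarantine step, and since the number of clusters $\ell$ can only decrease after quarantine while $L$ is unchanged, the size constraints $\ell\le M$ and $L\le\lceil \e|V|\rceil$ persist.
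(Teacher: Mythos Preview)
The paper does not actually prove this theorem; it is quoted (with a reference to the Koml\'os--Simonovits survey) and used as a black box. So there is no paper's proof to compare against, and your plan---deduce the Degree Form from the classical Regularity Lemma by deleting sparse/irregular/internal edges and quarantining clusters that lie in too many irregular pairs---is precisely the standard derivation.

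However, your write-up has a genuine gap. The degree inequality $\deg_{G'}(v)>\deg_G(v)-(\delta+\epsilon)|V|$ is required for \emph{every} $v\in V$, but you verify it only for $v\in V_i$ with $i\ge 1$. Under your construction a vertex $v\in V_0'$ loses all of its edges in step~(a), so $\deg_{G'}(v)=0$ and the inequality fails whenever $\deg_G(v)\ge(\delta+\epsilon)|V|$. The same problem hits vertices in the quarantined bad clusters: you defined $G'$ \emph{before} quarantining, so a vertex $v$ in a bad cluster $V_j'$ has already had its edges in every irregular pair containing $V_j'$ removed in step~(c); since $V_j'$ is bad precisely because it sits in many (possibly up to $\ell'$) irregular pairs, such a $v$ may lose essentially all of its degree.

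The fix is to reverse the order of operations: first quarantine to obtain the final partition $V_0,V_1,\dots,V_\ell$, and only then define $G'$ by deleting edges inside $V_i$ for $i\ge 1$ and edges in irregular or low-density pairs $(V_i,V_j)$ with $1\le i<j\le\ell$. Edges touching $V_0$ are left intact---the theorem imposes no constraint on them---so every $v\in V_0$ satisfies $\deg_{G'}(v)=\deg_G(v)$ trivially, and your bookkeeping for $v$ in a surviving cluster goes through exactly as written (indeed with the term from~(a) removed).
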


The sets $V_i$ ($1\leq i\leq\ell$) are defined to be {\it clusters}.
In addition, we define the {\it reduced graph}.  This is the graph,
usually denoted $G_r$, which has $\{V_1,\ldots,V_{\ell}\}$ as its
vertex set and $V_i\sim_{G_r} V_j$ if and only if the pair $(V_i,V_j)$
is $\epsilon$-regular with density at least $\delta$ in $G'$.

In papers utilizing the Regularity Lemma, it is customary to use the
notation ``$\ll$''.  The expression $\epsilon\ll\delta$ means that
$\epsilon$ is ``small enough, relative'' to $\delta$.

As we have discussed, all the properties we consider are
monotone; that is, if $G$ has the property in question,
then all graphs that have $G$ as a subgraph will have
the property as well.  At several times
throughout this paper, we will want to add edges to a subgraph $H'$
instead of the original graph $H$; often, the subgraph given by
Theorem~\ref{reglem}.  Let $\cal P$ be a monotone property of graphs.
A simple coupling argument gives the intuitive result:
\begin{proposition}
   Let $H'\subset H$.  Let $G'$ be the graph formed by adding $m$
   random edges to $H'$ and $G$ be the graph formed by adding $m$
   random edges to $H$. Then
\[ \Pr\left\{G'\mbox{ has property }{\cal P}\right\}
   \leq\Pr\left\{G\mbox{ has property }{\cal P}\right\}\ . \]
\label{subprop}
\end{proposition}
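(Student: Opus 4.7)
The plan is to construct an explicit coupling of the two random graphs on a common probability space so that $G' \subseteq G$ holds with probability one; since $\cP$ is monotone, the event $\{G' \in \cP\}$ is then pointwise contained in $\{G \in \cP\}$ and the inequality on probabilities follows immediately.

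Write $F_0 = \binom{V}{2} \setminus E(H)$ for the non-edges of $H$ and $F_1 = E(H) \setminus E(H')$ for the edges of $H$ that are absent from $H'$, so the non-edges of $H'$ are the disjoint union $F_0 \cup F_1$. Draw $R'$ uniformly from the $m$-subsets of $F_0 \cup F_1$ and set $G' = H' \cup R'$. Partition $R'$ as $T \cup S$ with $T = R' \cap F_0$ and $S = R' \cap F_1$. Now construct $R$ in two stages: first include every edge of $T$, then augment $R$ with $m - |T|$ further edges chosen uniformly at random from $F_0 \setminus T$, independently of the previous stages; set $G = H \cup R$. By construction $T \subseteq R$, $S \subseteq F_1 \subseteq E(H)$, and $E(H') \subseteq E(H)$, so
\[ E(G') \;=\; E(H') \cup S \cup T \;\subseteq\; E(H) \cup R \;=\; E(G), \]
giving $G' \subseteq G$ on every sample path.

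The only thing that must be checked is that $R$ really has the marginal distribution of $G_{H,m}$, i.e.\ is uniform over $m$-subsets of $F_0$. This is a short exchangeability argument: conditional on $|T| = t$, the restriction $R' \cap F_0$ is uniform over $t$-subsets of $F_0$, after which the remaining $m - t$ edges of $R$ are drawn uniformly from $F_0 \setminus T$. Summing the probability of a fixed $m$-set $A \subseteq F_0$ over the possible realizations $T_0 \subseteq A$ of $T$ uses only the identity $\binom{|F_0|}{t}\binom{|F_0|-t}{m-t} = \binom{|F_0|}{m}\binom{m}{t}$ and collapses to $1/\binom{|F_0|}{m}$, independently of $t$. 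This is the only piece of "work" in the argument, and I do not anticipate any real obstacle: the coupling is engineered precisely so that the inclusion $G' \subseteq G$ is visible at the level of sample paths, and the marginal check is a one-line computation.
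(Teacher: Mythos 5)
Your coupling is correct: conditioning on $|T|=t$, the set $T$ is uniform over $t$-subsets of $F_0$, and the subset identity you cite indeed makes $\Pr\{R=A\mid |T|=t\}=1/\binom{|F_0|}{m}$ for every $t$, so $R$ has the right marginal while $G'\subseteq G$ holds pointwise and monotonicity of $\cal P$ finishes the argument. This is exactly the ``simple coupling argument'' the paper invokes without detail, so you have taken essentially the same route, just written out explicitly.
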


%
%\subsection{Further notes}
%
%In this paper, all logarithms are assumed to be base $e$.  We will
%frequently make use of the bound
%\[ \exp\left(-x-x^2\right)\leq 1-x\leq\exp(-x) \]
%which is very accurate for $x$ small in absolute value. \\
%

\section{Complete subgraphs}

%\begin{theorem}
%   Let $r\geq 3$ be an integer and $d\in(0,1/2]$ a fixed constant.
%   \begin{enumerate} \item Let $H$ be an arbitrary graph with minimum
%   degree at least $dn$.  If $m=\omega\left(n^{2-2/\left(\lceil
%   r/2\rceil-1\right)}\right)$, then $G$ contains a $K_r$
%   \whp. \label{KrSuf} \item There exists a graph $H_0$ with minimum
%   degree at least $dn$ such that if $m=o\left(n^{2-2/\left(\lceil
%   r/2\rceil-1\right)}\right)$, then $G$ fails to induce a $K_r$,
%   \whp. \label{KrNec} \end{enumerate}
%\label{Kr}
%\end{theorem}

\begin{proof}[Proof of Theorem~\ref{Kr}]
Let us first prove~\ref{KrNec}.  Let \( H_0 \) be a
complete \( r_0 \)-partite graph
with nearly equal parts.
Note that if \( r \le 2 r_0 \) then we are adding no random
edges, so we henceforth assume \( r > 2r_0 \).  Let
\(A_1 , \dots, A_{r_0} \) be the parts of the
\(r_0\)-partition of \( V(H_0) \).  By the choice of \(m\) and
Theorem~\ref{randclique}, {\bf whp} the induced graph \( G[A_i] \)
contains no \( K_{ \lceil r/r_0 \rceil } \) for
\( i =1 , \dots ,r_0 \).  If this is the case, then the
largest clique in
\(G\) has size
\[ r_0  \left( \lceil r/r_0 \rceil -1 \right) < r. \]

%
%$$H_0\stackrel{\rm def}{=}K_{\lceil dn\rceil, n-\lceil dn\rceil}.$$
%
%is
%our example for $r>4$.  (If $d=1/2$, our example is only for $n$
%even.)
%
%The number of edges necessary to force a $K_r$ with high
%probability is the number necessary to create a $K_{r'}$ in one
%partition and $K_{r-r'}$ in the other, for some $r'\leq r$.
%
%Because $r$ is a constant, if $m=o\left(n^{2-2/\left(\lceil
%r/2\rceil-1\right)}\right)$, there can be no $K_r$ in $G$.  The
%largest clique can be of size at most $K_{2\lceil r/2\rceil-2}$.
%
%For $r=3,4$, we use a different example.  If $n$ is even, then we can
%construct a $\lceil dn\rceil$-regular bipartite graph with $n/2$
%vertices in each partition.  Of the $\bin{n}{2}-\frac{n}{2}\lceil
%dn\rceil$ edges that could be added, there are
%$\frac{n}{2}\left(\frac{n}{2}-\lceil dn\rceil\right)$ of them that
%will not form a $K_3$.  Thus, if $m=O(1)$, there is a fixed constant
%probability with which $G$ has no $K_3$, let alone a $K_4$.  A similar
%example can be found for $n$ odd.
%
%We give two proofs for part~\ref{KrSuf}.
%We begin with a proof that uses the Degree Form
%of the Regularity Lemma, Theorem~\ref{reglem}.
%We take up the second method at the end
%of the proof.
%
%
%and apply it to $H$.
%

To prove part~\ref{KrSuf} we apply the degree form of the Regularity
Lemma.
Choose $\epsilon$ and $\delta$ so that $\epsilon\ll\delta\ll d, 1/r$
and apply Theorem~\ref{reglem} to $H$ with those values, giving us the
subgraph $H'$.  The reduced graph has minimum degree
\[ (d-\delta-2\epsilon)\ell>\frac{r_0-2}{r_0-1}\ell \]
for \( \delta, \epsilon \) sufficiently small.
Therefore, it follows from Tur\'an's theorem that there
is a \(K_{r_0} \) in the reduced graph.  In other words, there
exists a collection of clusters \( A_1, \dots, A_{r_0} \subseteq
V(H^\prime) \)  such
that each pair of these sets gives an \( \epsilon \)-regular pair
with density at least \( \delta \).

%
% If the reduced graph contains a $K_{r_0}$, then we can
%proceed similarly to the proof of Theorem~\ref{Kr}.  But, since
%$d>\frac{r_0-2}{r_0-1}$, if $\delta$ and $\epsilon$ are both small
%enough, $d-\delta-2\epsilon>\frac{r_0-2}{r_0-1}$.  Since the reduced
%graph must have minimum degree at least
%$(d-\delta-2\epsilon)\ell>\frac{r_0-2}{r_0-1}\ell$, Tur\'an's theorem
%ensures that there is, indeed, a $K_{r_0}$ in the reduced
%graph.
%

We use a fact from ~\cite{KS}.  For an arbitrary graph \(H\) and
\( v \in V(H) \) let \( N(v) \) be the
set of vertices adjacent to \(v\) in \(H\).
\begin{proposition}[Intersection Property]
Let \( (A,B) \) be an \(\e\)-regular pair with density
$\d $.  If \(Y \subseteq B\),
and
$(\d -\epsilon)^{k-1} |Y| >\epsilon |B| $ then
\[ \#\left\{\left(x_1,x_2,\ldots,x_{k}\right) : x_i\in A
   \left| \left( \bigcap_{i=1}^{k}N(x_i) \right) \cap Y \right| \leq
   ( \d -\epsilon)^{k}|Y|\right\}\leq k\epsilon |A|^{k} \]
\label{intprop}
\end{proposition}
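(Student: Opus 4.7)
The plan is to prove the proposition by induction on $k$, using the definition of $\epsilon$-regularity as the workhorse at each step.

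For the base case $k=1$, the hypothesis reduces to $|Y| > \epsilon|B|$. Let $X = \{x_1 \in A : |N(x_1) \cap Y| \leq (\delta - \epsilon)|Y|\}$. If we had $|X| > \epsilon|A|$, then double counting edges between $X$ and $Y$ would give $d(X,Y) \leq \delta - \epsilon$, contradicting $|d(X,Y) - d(A,B)| < \epsilon$ combined with $d(A,B) \geq \delta$. Hence $|X| \leq \epsilon|A|$, which is the claim at $k = 1$.

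For the inductive step, suppose the bound holds at $k-1$. Call a tuple $(x_1, \ldots, x_{k-1}) \in A^{k-1}$ \emph{good} if $\bigl|\bigcap_{i=1}^{k-1} N(x_i) \cap Y\bigr| > (\delta - \epsilon)^{k-1}|Y|$ and \emph{bad} otherwise. By the inductive hypothesis there are at most $(k-1)\epsilon|A|^{k-1}$ bad tuples, and extending each by an arbitrary $x_k \in A$ contributes at most $(k-1)\epsilon|A|^k$ bad $k$-tuples to the count.

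For each good tuple, set $Y' = \bigcap_{i=1}^{k-1} N(x_i) \cap Y \subseteq B$. By goodness $|Y'| > (\delta - \epsilon)^{k-1}|Y|$, and the hypothesis $(\delta - \epsilon)^{k-1}|Y| > \epsilon|B|$ forces $|Y'| > \epsilon|B|$, so the base case applies to $Y'$: at most $\epsilon|A|$ choices of $x_k$ satisfy $|N(x_k) \cap Y'| \leq (\delta - \epsilon)|Y'|$, while for every other $x_k$ we get $\bigl|\bigcap_{i=1}^{k} N(x_i) \cap Y\bigr| = |N(x_k) \cap Y'| > (\delta - \epsilon)|Y'| > (\delta - \epsilon)^k|Y|$. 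Summing over the at most $|A|^{k-1}$ good $(k-1)$-tuples contributes at most $\epsilon|A|^k$ additional bad $k$-tuples, for a total of at most $k\epsilon|A|^k$.

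There is no real obstacle here; the one point to verify carefully is that the hypothesis $(\delta - \epsilon)^{k-1}|Y| > \epsilon|B|$ has been tailored precisely so that the base case can be re-invoked on $Y'$ at the final stage of the induction, and one uses $\delta > \epsilon$ implicitly so that the quantities $(\delta - \epsilon)^j|Y|$ stay positive and decrease monotonically in $j$.
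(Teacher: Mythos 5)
Your induction is correct: the base case is exactly the definition of $\epsilon$-regularity (a set $X$ with $|X|>\epsilon|A|$ all of whose vertices have at most $(\delta-\epsilon)|Y|$ neighbours in $Y$ would force $d(X,Y)\le\delta-\epsilon$, contradicting regularity since $|Y|>\epsilon|B|$), and the split into bad $(k-1)$-prefixes (at most $(k-1)\epsilon|A|^{k-1}$ by induction, noting $(\delta-\epsilon)^{k-2}|Y|\ge(\delta-\epsilon)^{k-1}|Y|>\epsilon|B|$) plus good prefixes with at most $\epsilon|A|$ bad extensions of each yields the stated bound $k\epsilon|A|^{k}$. The paper does not actually prove this proposition (it is quoted from \cite{KS}), but your argument is the standard one and runs exactly parallel to the inductive proof the paper gives for the companion Union Property, Proposition~\ref{unionprop}, so nothing further is needed.
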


We add
\( \omega( n^{2-2/\left(\lceil r/r_0\rceil-1\right)} ) \)
random edges to \( H^\prime \) in a sequence of \( r_0 \)
phases.
$\varphi=\varphi(n)$ will be a function that
goes to infinity as
$n\rightarrow\infty$ and we let \( R = R_1 \cup
\dots \cup R_{r_0} \) where
\( |R_i| = \varphi^{2} n^{2-2/\left(\lceil r/r_0\rceil-1\right)} \)
for \( i =1, \dots , r_0 \). (Recall that \(R\) is the set of random
edges.)

It follows from (\ref{randclique})
that \whp\ \( R_1 \cap \binom{A_1}{2} \)
contains at least \( \varphi \) copies
of
\(K_{ \lceil r/ r_0 \rceil} \).  It follows from
the Intersection Property that among these \whp\ there
exists a copy of \( K_{ \lceil r/ r_0 \rceil} \) such that
intersection of the neighborhoods of the vertices in
this \( K_{ \lceil r/ r_0 \rceil} \) is large
in \( A_2, \dots, A_{r_0} \); to be precise, \whp\ there
 exists \( X_1 \subseteq A_1 \) such that
\begin{itemize}
\item[(i)] \( |X_1| =  \lceil r/ r_0 \rceil \),
\item[(ii)] \( \binom{X_1}{2} \subseteq R_1 \), and
\item[(iii)] For \( i =2, \dots, r_0 \) we have
\[ \left| \left( \bigcap_{x \in X_1} N(x) \right)
\cap A_i \right| \ge ( \d - \e)^ {  \lceil r/ r_0 \rceil} |A_i| \]
\end{itemize}
For \( i =2 , \dots, r_0 \) let
\[ A_{i,1} =  \left( \bigcap_{x \in X_1} N(x) \right)
\cap A_i. \]

In the phase \(j\), \(j=2, \dots, r_0\), we note that it follows from
Theorem~\ref{randclique} that \whp\ \( R_j \cap \binom{ A_{j,j-1}}{2} \)
contains at least \( \varphi \) copies of
\(K_{ \lceil r/ r_0 \rceil} \).  It then follows from an
application of the Intersection Property
that \whp\ there
 exists \( X_j \subseteq A_{j,j-1} \) such that
\begin{itemize}
\item[(i)] \( |X_j| =  \lceil r/ r_0 \rceil \),
\item[(ii)] \( \binom{X_j}{2} \subseteq R_j \), and
\item[(iii)] For \( i =j+1, \dots, r_0 \) we have
\[ \left| \left( \bigcap_{x \in X_j} N(x) \right)
\cap A_{i,j-1} \right| \ge ( \d - \e)^ {  \lceil r/ r_0 \rceil} |A_{i,j-1}| \]
\end{itemize}
If \( j < r_0 \) then, for \( i =j+1 , \dots, r_0 \), let
\[ A_{i,j} =  \left( \bigcap_{x \in X_j} N(x) \right)
\cap A_{i,j-1}. \]
If \(j=r_0 \) then note that \( X_1 \cup \dots \cup X_{  r_0 } \) is
a \( K_r \) in \(G_{H,m} \).

Note that this argument requires
\[ \frac{ k \e}{ (\d - \e)^{r_0k^2 }} < 1  \hskip5mm \text{ and }
\hskip5mm
(\d - \e)^{r_0k} > \e \]
where \( k = \lceil r/r_0 \rceil \).
Of course, we can attain these inequalities
by choosing \( \e \) sufficiently small.

\end{proof}

\begin{remark} The proof of Theorem~\ref{Kr}~(a)
goes through if the condition \( H \in {\mathcal G}(n,d) \) is
replaced with the condition
\[  \sum_{v \in V(H)} deg(v) \ge 2d |V(H)|, \]
i.e. the average degree of \(H\) is at least \(d\).  Just as a
minimum degree condition on \(H\) implies a minimum degree condition on
\( H' \) that in turns implies a minimum degree condition on
the reduced graph, a lower bound on the average degree of \( H\)
implies a lower bound on the average degree of \(H'\) that in turn gives
a lower bound on the average degree of the reduced graph.  Such a
bound on the average degree
is all that is required to establish the
existence of a \(K_{ \lceil r/r_0 \rceil} \) in the
reduced graph (by Tur\'an's Theorem).
\end{remark}

\section{Diameter}

\begin{proof}[Proof of Theorem~\ref{lbdiam5}]

We apply the Regularity Lemma to $H$ with parameters
$ \epsilon \ll \d \ll d $.  This gives us the reduced graph $H_r$ as
well as the subgraph $H'$.  We will work with $H'$ instead of $H$ by
invoking Proposition~\ref{subprop}.

For each cluster $V$ let $V'$
be a fixed cluster
such that $(V,V')$ is, in $H'$, $\epsilon$-regular with density
at least $\d$ (i.e. $V\sim V'$ in $H_r$).  Note that
for every vertex \(x\) there exists a cluster $V_x$ in which $x$ has large
degree in \(H'\) (note that \(x \not\in V_x\)).  Indeed, using the conditions
of Theorem~\ref{reglem}, the minimum
degree condition and the fact that $|V_0| \le \epsilon n $,
we see that $x$ is adjacent to at least
\( dn - (\d+\epsilon)n - \epsilon n
   = (d-\d-2\epsilon)n \)
vertices in $\bigcup_{i=1}^{\ell}V_i$.  So, there is a cluster $V_x$
such that
$x$ is adjacent to at least
\begin{equation}
\label{eq:linkdeg}
 \frac{(d-\d-2\epsilon)n}{\ell}\geq (d-\d-2\epsilon)L
\end{equation}
vertices in $V_x$.

For arbitrary vertices $u$ and $w$
we find a path of length at most 5 from \(u\)
to \(w\) using the clusters \( V_u, V_w, V_u'\) and \( V_w'\).
Since $V_u\sim_{H_r}V_u'$, at least
$(1-\epsilon)L$ vertices in $V_u'$
are at a distance two from $u$.  The same holds for $w$.
It follows, for example, that
$V_u' = V_w'$ implies
$\dist(u,w)\leq 4$.  It is also
easy to see that if
\( V_u' \sim_{H_r} V_w' \) then $\dist(u,w)\leq 5$.
So, we may assume that the clusters \( V_u, V_w, V_u' \) and \( V_w'\)
are distinct and
\[ V_u\sim_{H_r}V_u'\qquad V_u'\not\sim_{H_r}V_w'\qquad
   V_w'\sim_{H_r} V_w. \]
We use the random edges between \( V_u' \) and \( V_w'\)
to complete the path from \(u\) to \(w\).

%Suppose $V_u$ and $V_w$ are distinct.
%If $V_u\sim_{H_r} V_w$, then $\dist(u,w)\leq 3$.  Otherwise, let
%$V_u\sim_{H_r} V_u'$ and $V_w\sim_{H_r} V_w'$.  If $V_u'=V_w'$, then
%$\dist(u,w)\leq 4$.  If they are distinct, but $V_u'\sim_{H_r} V_w'$,
%then $\dist(u,w)\leq 5$.
%

For a fixed constant \(k\) and fixed clusters
\(V_1, V_2, V_3, V_4 \) such that
\begin{equation}
\label{eq:adjs}
V_1 \sim_{H_r} V_2 \hskip5mm \text{ and } \hskip5mm
V_3 \sim_{H_r} V_4
\end{equation}
we define a {\em $k$-link} of the quadruple
\( V_1, V_2, V_3, V_4 \) to be a collection of \(k^2 \) edges
\[ \left\{ e_{i,j} = \{ a_{i,j}, b_{i,j} \}
: 1 \le i,j \le k \right\} \]
such that
\begin{itemize}
\item[(i)] \( a_{i,j} \in V_2 \) for \( 1 \le i,j \le k \),
\item[(ii)] \( b_{i,j} \in V_3 \) for \( 1 \le i,j \le k \),
\item[(iii)] For \( i =1, \dots, k\) we have
\[ \left| \bigcup_{j=1}^k N(a_{i,j}) \cap V_1 \right|
> L - ( d - \d-  2\e)L \]
\item[(iv)] For every function \( \sigma : [k] \to [k] \) we
have
\[ \left| \bigcup_{i=1}^k N(b_{i,\sigma(i)}) \cap V_4 \right|
> L - ( d - \d - 2\e)L \]
\end{itemize}
We show below that there exists a \(k\) such that \whp\ \(R\) contains
a \(k\)-link for every quadruple of clusters satisfying (\ref{eq:adjs}).

Before proving the existence of these \(k\)-links, we show that
if \( R \) contains a \(k\)-link of \( V_u, V_u', V_w', V_w \)
then there is a path of length 5 from \(u\) to \(v\) in \( G_{H,m} \).
Let \( X = N(u) \cap V_u \) and \(Y = N(w) \cap V_w \).  It follows
from (\ref{eq:linkdeg}) and (iii) that for \( i = 1, \dots k \) there exists
\( \sigma(i) \in [k] \) such that \( a_{i,\sigma(i)} \) has a neighbor
\(x_i \in  X \).  It then follows from (\ref{eq:linkdeg}) and (iv)
that there exists \(j \in [k ] \) such that \( b_{j, \sigma(j)} \)
has a neighbor \(y \in Y\).  The following sequence of vertices
is a path in \( G_{H,m}\): \(u,x_j, a_{j,\sigma(j)}, b_{j,\sigma(j)},y,w\).

It remains to show that there exists a \(k\) such
that \whp\ \(R\) contains
a \(k\)-link for every quadruple of clusters
satisfying (\ref{eq:adjs}).  Since the number of vertices in
the reduced graph \( H_r \) is bounded, it suffices to show that
there exists a constant \(k\) such that
\whp\ a fixed quadruple \( V_1, V_2, V_3 ,V_4 \)
satisfying (\ref{eq:adjs}) has a \(k\)-link.
We use the following simple
fact regarding \(\e\)-regular pairs.
\begin{proposition}[Union Property]
Let $(A,B)$ be an $\epsilon$-regular
pair with density at least $\delta$.
If $k\geq 1$ and $(1-\d+\epsilon)^{k-1}\geq\epsilon$ then
\[ \#\left\{\left(x_1,x_2,\ldots,x_{k}\right) : x_i\in A,
   \left|\bigcup_{i=1}^{k}N(x_i)\right| \leq
   \left(1-(1-\d+\epsilon)^{k}\right)|B|\right\}\leq
   k\epsilon |A|^{k} \]
\label{unionprop}
\end{proposition}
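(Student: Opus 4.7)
The plan is to reduce the statement to the upper-direction analogue of the Intersection Property by passing to the bipartite complement $\bar{G}$ on $(A,B)$, whose edges are precisely the non-edges of the pair in $H'$. First I note that complementation preserves $\epsilon$-regularity: from $|d(X,Y)-d(A,B)|<\epsilon$ one reads off $|(1-d(X,Y))-(1-d(A,B))|<\epsilon$, so $(A,B)$ remains $\epsilon$-regular in $\bar{G}$, though now with density $1-d(A,B)\leq 1-\delta$. Writing $\bar{N}(x)=B\setminus N(x)$, the complement in $B$ of $\bigcup_{i=1}^{k}N(x_i)$ is exactly $\bigcap_{i=1}^{k}\bar{N}(x_i)$, so the conclusion is equivalent to the claim that the set
\[ T_k := \left\{(x_1,\dots,x_k)\in A^k : \left|\bigcap_{i=1}^{k}\bar{N}(x_i)\right|\geq (1-\delta+\epsilon)^{k}|B|\right\} \]
satisfies $|T_k|\leq k\epsilon|A|^k$.

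I would prove this by induction on $k$, mirroring the standard inductive proof of the Intersection Property but with all inequalities reversed in order to exploit the upper bound $1-\delta$ on density in $\bar{G}$. For the base case $k=1$, if more than $\epsilon|A|$ vertices $x_1$ satisfied $|\bar{N}(x_1)|\geq (1-\delta+\epsilon)|B|$, then the $\bar{G}$-density from this set to $B$ would be at least $1-\delta+\epsilon$, contradicting regularity in $\bar{G}$ (which forces the density to be strictly less than $(1-\delta)+\epsilon$).

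For the inductive step, I would partition $T_k$ according to whether the prefix $(x_1,\ldots,x_{k-1})$ lies in $T_{k-1}$. Prefixes in $T_{k-1}$ number at most $(k-1)\epsilon|A|^{k-1}$ by induction and contribute at most $(k-1)\epsilon|A|^k$ tuples after appending an arbitrary $x_k$. For a prefix not in $T_{k-1}$ one has $|Z|:=|\bigcap_{i<k}\bar{N}(x_i)|<(1-\delta+\epsilon)^{k-1}|B|$, so an extension $x_k$ placing the full tuple in $T_k$ must satisfy $|\bar{N}(x_k)\cap Z|\geq (1-\delta+\epsilon)^{k}|B|$, which forces the $\bar{G}$-density from $\{x_k\}$ to $Z$ to strictly exceed $(1-\delta+\epsilon)^{k}|B|/|Z|>1-\delta+\epsilon$, precisely by the sub-case bound on $|Z|$. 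Applying regularity of $(A,Z)$ in $\bar{G}$ then limits the number of such extensions to $\epsilon|A|$, yielding $\epsilon|A|^k$ additional tuples and a grand total of $k\epsilon|A|^k$.

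The subtle point — and the place where the hypothesis $(1-\delta+\epsilon)^{k-1}\geq \epsilon$ does its work — is verifying that $|Z|\geq \epsilon|B|$, so that regularity may actually be invoked on $(A,Z)$ in $\bar{G}$ at each inductive step. This is the union-side counterpart of the condition $(\delta-\epsilon)^{k-1}|Y|>\epsilon|B|$ appearing in the Intersection Property, and I expect that carefully tracking this lower bound on $|Z|$ across the levels of the induction will be the main, though routine, technical obstacle.
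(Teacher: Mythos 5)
Your inductive skeleton (induction on $k$, splitting tuples according to whether the prefix $(x_1,\dots,x_{k-1})$ is already bad) is exactly the paper's argument, and passing to the bipartite complement is only a re-coding; the difference is that the one step you defer as ``routine'' is precisely where the proof needs a further idea, and the idea you propose for it is false. For a prefix \emph{not} in $T_{k-1}$ you only know the \emph{upper} bound $|Z|<(1-\delta+\epsilon)^{k-1}|B|$, where $Z=\bigcap_{i<k}\bar{N}(x_i)$; there is no lower bound on $|Z|$ to ``track across the levels of the induction'' --- $Z$ can be far smaller than $\epsilon|B|$, indeed empty. Splitting off the harmless case $|Z|<(1-\delta+\epsilon)^{k}|B|$ (where no extension can be bad) does not close the gap either, because the hypothesis only guarantees $(1-\delta+\epsilon)^{k-1}\ge\epsilon$ and allows $(1-\delta+\epsilon)^{k}<\epsilon$, so there is a genuine regime $(1-\delta+\epsilon)^{k}|B|\le |Z|\le\epsilon|B|$ in which bad extensions are possible but regularity cannot be invoked on a pair involving $Z$.

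The paper's proof handles this not with a lower bound on the residual set but by \emph{enlarging} it: for a good prefix it fixes an arbitrary set $Y$ of size exactly $(1-\delta+\epsilon)^{k-1}|B|$ containing $B\setminus\bigcup_{i<k}N(x_i)$ (this is where the hypothesis $(1-\delta+\epsilon)^{k-1}\ge\epsilon$ is used, ensuring $|Y|\ge\epsilon|B|$), and then counts the vertices $x_k$ with fewer than $(\delta-\epsilon)|Y|$ neighbours in $Y$: regularity bounds their number by $\epsilon|A|$, and every other choice of $x_k$ shrinks the uncovered part of $B$ to at most $(1-\delta+\epsilon)|Y|=(1-\delta+\epsilon)^{k}|B|$. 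With that device your argument goes through verbatim. A minor additional point: regularity says nothing about the ``density from $\{x_k\}$ to $Z$'' for a single vertex; one must collect all offending vertices $x_k$ into a set $X$ and argue that $|X|>\epsilon|A|$ would violate regularity of the pair, which is presumably what you intended.
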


\begin{proof}
We go by induction on $k$.  For $k=1$ this statement
follows directly from
the definition of $\epsilon$-regularity.
For \( k \ge 2 \), let
\( {\mathcal X} \) be
the collection of $(k-1)$-tuples \( x_1, \dots ,x_{k-1} \)
of elements of
\(A\) with the property that
\[ \left| \bigcup_{j=1}^{k-1} N(x_j) \right| \le
\left(1-(1-\d+\epsilon)^{k-1}\right)|B| .\]  By induction,
\( | {\mathcal X}| \le
(k-1)\epsilon |A|^{k-1} \).
For a fixed \( (k-1)\)-tuple \( v_1,\ldots, v_{k-1} \) not in
\( {\mathcal X} \),
the number of vertices that are adjacent to fewer than
$(\d-\epsilon)(1-\d+\epsilon)^{k-1}|B|$ vertices
among a set of $(1-\d+\epsilon)^{k-1}|B|$ vertices that
contain the complement of that union is at most $\epsilon |A|$, by the
definition of $\epsilon$-regularity.
Thus, the number of bad $k$-tuples is at most
\[
|{\mathcal X}| \cdot |A| + |A|^{k-1}\cdot \epsilon|A|
\le (k-1)\epsilon |A|^{k-1}\cdot |A|+|A|^{k-1}\cdot \epsilon|A|
= k\epsilon |A|^k . \]
\end{proof}
Let $k$ be a constant such that
\begin{gather*}
1-(1-\delta+\epsilon)^k\geq 1-(d-\delta-2\epsilon) \\
(1-\delta+\epsilon)^{k-1}\geq\epsilon, \; \text{ and}  \\
(k + k^{k})k\epsilon<1.
\end{gather*}
Clearly, such a \(k\) exists if we choose the constants
\( \epsilon \) and \( \delta \) appropriately.

We are now ready to consider the random edges.   Clearly,
the probability that
there are fewer than, say, \( \sqrt{m} \)
edges \( e  \in R\) that intersect both \( V_2\) and
\( V_3\) is \( o(1) \).  Conditioning on this event, we may assume
that there is a collection of
\( \sqrt{m} = \omega(1) \) edges \(R'\)
between clusters \( V_2 \) and \( V_3 \) chosen uniformly at
random from the
collection of all such edges.  It follows from the Union Property
that the probability that a
fixed set of \( k^2 \) such edges does not form a \( k\)-link
is at most \( (k + k^k) k\e \).  By our choice of \(k, \e\) and \(\d\),
this probability is less than 1.  Partitioning \( R' \)
into \( \omega(1) \) collections of \( k^2 \) edges, we see that
\whp\ one of these parts forms a \(k\)-link.

\end{proof}

\begin{proof}[Proof of Theorem~\ref{diam3}]
The proof of~\ref{diam3Suf} is a straightforward computation
showing that every pair of disjoint
neighborhoods must have an edge between
them.
Following Remark \ref{rem1} we consider
$G_{H,p}$ with $p=\frac{2(\log n+\om(1))}{d^2n^2}$.
\begin{align*}
   & \Pr\left\{\exists u,v \in \binom{[n]}{2}
          : R \cap \left(N(u) \times N(v)\right)=\emptyset
            \text{ and } N(u) \cap N(v) = \emptyset \right\} \\
 & \hskip9cm \le   \binom{n}{2}(1-p)^{d^2n^2} \\
 & \hskip9cm \le
   \binom{n}{2} e^{-d^2n^2p} \\
 & \hskip9cm =  o(1).
\end{align*}

In order to prove the existence of the
graph \( H_0 \) for~\ref{diam3Nec} we consider the graph \(H\)
which is the disjoint union of \( G_{ \lfloor n/2 \rfloor, p} \) and
\( G_{ \lceil n/2 \rceil, p} \) where \( p = 2d + n^{-1/3}\).
In other words, we form a partition \([n] = S \cup T\) where
\( |S|= \lfloor n/2 \rfloor \) and \(|T|=  \lceil n/2 \rceil\), place no edges
between the two parts, and place each edge that lies within
one of the parts, independently, with probability
\(p\).

Let \(R\) be an arbitrary set of \(m\) edges
on vertex set
\( [n] = S \cup T \).  Let \(S'\) and \(T'\) be the vertices
in \(S\) and \(T\), respectively, that are in edges in \(R\).
We begin by showing
that for this fixed set \(R\) \whp\
the random graph \( H \) has the following property:
There
exist \(u \in S \) and \(v \in T \) such that
\( (\{u\}\cup N(u)) \cap S' = \emptyset\) and
\( (\{v\} \cup N(v)) \cap T' = \emptyset \).
Let \(r = | S'| \).
The
probability that \(N(u) \cap S'\) is nonempty
for {\em every} vertex
in \(S \setminus S' \) is at most
\begin{equation*}
\begin{split}
(1 - (1-p)^r)^{n/2-r} & = \left( 1 - e^{r \log(1-p)} \right)^{n/2-r} \\
& \le \exp \left\{ - e^{r \log(1-p) }( n/2-r) \right\} \\
& \le \exp \left\{ - e^{2m \log(1-p) }( n/3) \right \} \\
& = o(1)\ .
\end{split}
\end{equation*}
It follows from this observation (and an application of the
Chernoff bound to the degrees in \(H\))
that there exists a fixed
graph \(H\)
such that
\begin{itemize}
\item[(i)] The minimum degree of \(H\) is greater than \(dn\).
\item[(ii)] \whp\
(with respect to the random choice of \(R\), now)
there exist \(u \in S\) and \( v \in T \) such
that no vertex in the set \( \{u,v\} \cup N(u) \cup N(v) \)
intersects an edge in \(R\).
\end{itemize}
The distance in \( G_{H,m} \) between the
vertices \(u\) and \(v\) given by (ii) is at least 5.

\end{proof}

\begin{proof}[Proof of Theorem~\ref{diam2}]
The proof of~\ref{diam2Suf} is also a straightforward calculation.
We show that there is an edge between every vertex and
neighborhood. Following Remark \ref{rem1}, we consider $G_{H,p}$ with
$p=\frac{2(\log n+\om(1))}{dn}$.
\begin{eqnarray*}
   \Pr\left\{\exists u,v : R \cap \left( \{u\} \times N(v) \right)
=\emptyset\right\} &
   \leq & \binom{n}{2}(1-p)^{dn}\\
   & \leq &
   \binom{n}{2} e^{-np} \\
   & = &o(1).
\end{eqnarray*}

The example of $H_0$ that we use for~\ref{diam2Nec} is
simply two complete graphs, each on \( n/2 \) vertices,
with no edges between them.  Let the vertex sets for these
complete graphs be \( S\) and \(T\).  Note that if there
exists a vertex \(u \in S\) that is in no edge in \(R\) and
there exists a vertex \(v \in T \) that is in no edge in \(R\)
then the distance between \(u\) and \(v\) in \( G_{H_0,m} \) is
at least 3.

We compute the probability that \(R\) touches every
vertex in \(S\).
\begin{equation*}
\begin{split}
Pr \left\{ \forall v \in S \exists e \in R
\text{ such that } v \in e \right\} &
\le \left( 1 - ( 1 -2/n)^m \right)^{n/2} \\
& \le \left( 1 - e^{- 2m/n - 4m/n^2} \right)^{n/2} \\
& \le \exp \left\{ - \frac{n}{2} e^{- 2m/n - 4m/n^2} \right\} \\
& = o(1)\ .
\end{split}
\end{equation*}
Note that we used the fact that the event
$\{\mbox{$\exists e \in R$ such that  $v \in e$}\}$ is
negatively
correlated with events of the form
$\{\mbox{ $\forall w \in W \exists e \in R $ such that $w \in e
$}\}$, where  $W \subseteq  S\setminus\{v\}$.

%
%the same as
%that used to prove Theorem~\ref{diam3}\ref{diam3Nec}.  We can, in
%fact, choose any one of these $H_0$ graphs that have minimum degree at
%least $dn$.  We will let $R'$ be the subset of $R$ that are in
%$(S_1\cup T_1,S_2\cup T_2)$.  Now we compute the probability that $R'$
%touches all members of $S_1$.
%
%Note that the event $\{\mbox{$R'$ touches $v\in S_1$}\}$ is negatively
%correlated with events of the form $\{\mbox{$R'$ touches $w\in
%S_1$}\}$, for any subset of $w\in S_1\setminus\{v\}$.  Also denote
%$m'=|R'|$.
%
%\begin{eqnarray*}
%   \Pr(\mbox{$R'$ touches all $v\in S_1$}) & \leq & \prod_{v\in
%   S_1}\Pr(\mbox{$R'$ touches $v$}) \\ & \leq & \prod_{v\in
%   S_1}\left[1-\frac{\bin{\lfloor n/2\rfloor\lceil n/2\rceil -\lceil
%   n/2\rceil}{m'}}{\bin{\lfloor n/2\rfloor\lceil
%   n/2\rceil}{m'}}\right]^{|S_1|} \\ & \leq &
%   \left[1-\left(1-\frac{\lceil n/2\rceil} {\lfloor n/2\rfloor\lceil
%   n/2\rceil-m'}\right)^{m'} \right]^{|S_1|}
%\end{eqnarray*}
%
%We use the bound $1-x\geq\exp(-x-x^2)$ and $m'\leq c_2n\log n$.
%\begin{eqnarray*}
%   \Pr(\mbox{$R'$ touches all $v\in S_1$}) & \leq &
%   \left[1-\exp\left(-\frac{c_2n\log n}{n/2}
%   \left(1+O\left(\frac{\log n}{n}\right)\right)\right)\right]^{|S_1|} \\
%   & \leq & \left[1-n^{-2c_2(1+o(1))}\right]^{|S_1|} \\
%   & \leq & \exp\left(-|S_1|n^{-2c_2(1+o(1))}\right)
%\end{eqnarray*}
%
%Thus, as long as $c_2<1/2$, adding $c_2n\ln n$ edges to $H_0$ results
%in $\diam(G)\geq 3$, \whp.

\end{proof}

\begin{remark}
Improvements in the constants in
Theorems~\ref{diam3}\ref{diam3Nec}~and~\ref{diam2}\ref{diam2Nec}
could be obtained by noting that a significant portion of the
random edges in \(R\) fall within the two parts of the
partition (of course, one would have to replace the
complete graphs used in the proof of Theorem~\ref{diam2}\ref{diam2Nec}
with arbitrary graphs of the desired density to get this
improvement there).  This observation was not used here for the
sake of brevity.

\end{remark}

\section{Vertex connectivity}
Let \( \kappa(G) \) denote the vertex connectivity of graph \(G\).
We first prove the following lemma that may be of independent interest.
\begin{lemma}\label{part_con}
Let $H=(V,E)$ be a graph on $n$ vertices with minimum degree $k>0$. Then
there exists a partition $V=V_1\cup\ldots\cup V_t$ such that for every
$1\le i\le t$ the set $V_i$ has at least $k/8$ vertices and the induced
subgraph $H[V_i]$ is $k^2/(16n)$-connected.
\end{lemma}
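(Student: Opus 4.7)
The plan is to prove the lemma via an extremal partition argument. Set $\lambda = k^2/(16n)$ and consider the collection of all partitions $V = V_1 \sqcup \cdots \sqcup V_t$ in which each $|V_i| \ge k/8$; this collection is nonempty because the trivial partition $\{V\}$ qualifies (we have $n \ge k+1 > k/8$). Let $\Pi = (V_1,\ldots,V_t)$ be a partition from this collection that lexicographically maximizes the pair $(t,\, \sum_i e(H[V_i]))$: first the number of parts, and then, subject to that, the total number of internal edges. The goal is to show every $H[V_i]$ is $\lambda$-connected.

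A first observation is that maximality of $t$ forces $|V_i| < k/4$ for every $i$; otherwise $V_i$ could be split into two arbitrary halves of size at least $k/8$, increasing $t$. Hence $k/8 \le |V_i| < k/4$, and consequently $t \le 8n/k$. Suppose for contradiction that some $H[V_i]$ admits a vertex cut $S \subset V_i$ with $|S| < \lambda$ and $V_i \setminus S = A \sqcup B$, with $A$ and $B$ nonempty and $e_H(A,B) = 0$. Since $|V_i| < k/4$, the sets $A$ and $B$ cannot both have size at least $k/8$; without loss of generality $|A| < k/8$.

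Suppose first that $|V_i| - |A| = |B| + |S| \ge k/8$. Each $v \in A$ satisfies $N_H(v) \cap V_i \subseteq A \cup S$, so $v$ has more than $7k/8 - \lambda$ neighbors in $V \setminus V_i$. Summing over $A$ and averaging over the $t-1 < 8n/k$ parts other than $V_i$ yields some $V_j$ with $e_H(A, V_j) > |A|(7k/8 - \lambda) \cdot k/(8n) > |A|\lambda \ge e_H(A,S)$. Moving $A$ from $V_i$ into this $V_j$ preserves $t$ and all size constraints but raises $\sum_i e(H[V_i])$ by $e_H(A, V_j) - e_H(A,S) > 0$, contradicting the choice of $\Pi$.

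The remaining case, when both $|A|$ and $|B|+|S|$ are less than $k/8$, is more delicate: $V_i$ itself has size less than $k/4$ and neither side of the cut can stand alone as a part. To handle it I plan to merge $V_i$ with a neighboring part $V_j$ chosen to maximize $e_H(V_i, V_j)$ and then re-split $V_i \cup V_j$ along a new cut that exploits the weakness $S$ while respecting the size bound $k/8$ on both sides (this is feasible because $|V_i|+|V_j|\ge k/4$). Since every $v \in V_i$ has at least $3k/4$ neighbors outside $V_i$, averaging gives $e_H(V_i, V_j) \ge (3/2)|V_i|\lambda$, a quantity that should dominate both the cost $e_H(A,S) < |A|\lambda$ and the cross-edges introduced by the new split. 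The main obstacle is carrying out this last step quantitatively: one must identify a feasible re-split of $V_i \cup V_j$ into two parts, each of size at least $k/8$, with cut strictly smaller than $e_H(V_i, V_j)$. The constants $k/8$ and $k^2/(16n)$ in the statement seem to be tuned precisely so that a random splitting of the partner part $V_j$, combined with a greedy reassignment of vertices of $V_j$ according to whether they have more $H$-neighbors in $A$ or in $B \cup S$, delivers a strictly positive expected gain in internal edges and thus contradicts the choice of $\Pi$.
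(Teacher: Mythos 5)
Your Case 1 argument is sound: with the lexicographic maximum (number of parts first, then internal edges), every part has size in $[k/8,k/4)$, so $t\le 8n/k$, and if a cut $S$, $|S|<k^2/(16n)$, separates $A$ from $B$ with $|A|<k/8$ and $|B\cup S|\ge k/8$, then each vertex of $A$ has more than $7k/8-k^2/(16n)$ neighbours outside $V_i$, averaging gives a part $V_j$ with $e_H(A,V_j)>|A|\cdot\frac{7k^2}{64n}\cdot(1-o(1))>|A|\,|S|\ge e_H(A,S)$, and shifting $A$ into $V_j$ keeps $t$ and all size constraints while strictly increasing the internal edge count --- a genuine contradiction. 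But the proof is incomplete exactly where you say it is: the case in which both $|A|<k/8$ and $|B\cup S|<k/8$. There you cannot move either side out (the remainder of $V_i$ would drop below $k/8$), and the merge-and-resplit of $V_i\cup V_j$ is only a plan, not an argument. To get a contradiction you would need a repartition of $V_i\cup V_j$ into two sets of size at least $k/8$ whose cut is strictly smaller than $e_H(V_i,V_j)$, and nothing in your sketch guarantees this: if, say, $V_j$ is a clique on roughly $k/8$ vertices, any split of $V_j$ is expensive, while any split that leaves $V_j$ intact amounts to moving a subset of $V_i$ of size less than $k/8$ across, which is neither side of the cheap cut $S$; the smallness of $S$ then gives you no control over the new cut. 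The ``random split plus greedy reassignment'' heuristic is precisely the step that would have to be made quantitative, and it is not clear it can be.

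For comparison, the paper avoids this difficulty entirely by not working with a single extremal partition. It first invokes Mader's theorem (every graph of average degree at least $k$ contains a $k/4$-connected subgraph) to build a maximal family of disjoint $k/8$-connected ``cores'' $C_1,\ldots,C_t$, and then grows these into disjoint $k^2/(16n)$-connected sets $V_i\supseteq C_i$ covering as many vertices as possible. If some vertex $v$ is uncovered, either $v$ has at least $k^2/(16n)$ neighbours in some $V_i$, in which case it can be absorbed without destroying $k^2/(16n)$-connectivity (contradicting maximality of the cover), or $v$ keeps degree more than $k-(8n/k)\cdot k^2/(16n)=k/2$ among the uncovered vertices, so Mader's theorem applied to the uncovered part produces a new $k/8$-connected subgraph disjoint from all the $C_i$, contradicting maximality of the core family. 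The cores give exactly the ``both sides small'' robustness that your partition argument lacks; if you want to salvage your approach you will most likely need some such connectivity anchor inside each part, or a different potential function that penalizes parts admitting sparse internal cuts.
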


\begin{proof}
Recall the following classical result on vertex connectivity.
\begin{theorem}[Mader (see~\cite{DI})]
Every graph of average degree at least $k$ has a $k/4$-connected
subgraph.
\label{mader}
\end{theorem}

%If $k^2<16n$, then partition of $H$ into connected
%components satisfies obviously the lemma's claim. We thus assume that
%$k^2\ge 16n$.
Let $(C_1,\ldots,C_t)$ be a family
of disjoint subsets of $V$ with the property that each induced
subgraph $H[C_i]$ is $k/8$-connected and that, among all such
families of subsets, the set of vertices
\[ C\stackrel{\rm def}{=}\bigcup_{i=1}^t C_i \]
is maximal.
According to Theorem~\ref{mader}, $t>0$.  Also, $|C_i|\geq k/8$ for
all $i$ and thus $t\leq 8n/k$.

Let now $(V_1,\ldots,V_t)$ be a family of disjoint subsets of $V$ such
that $C_i\subseteq V_i$, the induced subgraph $H[V_i]$ is
$k^2/(16n)$-connected for all $1\le i\le t$ and that among all such
families the set of vertices
\[ U\stackrel{\rm def}{=}\bigcup_{i=1}^t V_i\]
is maximal. We claim that $U=V$. Assume to the contrary that there
exists a vertex $v\in V\setminus U$. If $|N(v)\cap V_i|\ge k^2/(16n)$
for some $i$, then adding $v$ to $V_i$ can be easily seen to keep
$H[V_i]$ $k^2/(16n)$-connected, contradicting the maximality of $U$.
Thus $v$
has less than $k^2/(16n)$ neighbors in each of the $t\le 8n/k$ sets
$V_i$, and therefore $deg_{V\setminus U}v> k-(8n/k)(k^2/(16n))=k/2$. We
conclude that the minimum degree of the induced subgraph $H[V\setminus
U]$ is at least $k/2$. Applying Theorem ~\ref{mader}, this time to
$H[V\setminus U]$, unveils a $k/8$-connected subgraph disjoint from $C$
-- a contradiction to the choice of $(C_1,\ldots,C_t)$. Hence the family
$(V_1,\ldots,V_t)$ covers indeed all the vertices of $H$ and thus forms
a required partition.
\end{proof}

We remark that the above result is optimal up to constant multiplicative
factors. To see this take
$ \lceil (n-k^2/n)/(k+1) \rceil$
disjoint cliques $C_i$ of size
$k+1$ each, add an independent set $I$ on the (at most
$k^2/n$) remaining vertices, and connect
each vertex of $I$ with roughly $k^2/n$ arbitrarily chosen
vertices of $C_i$,
$1\le i\le \lceil (n-k^2/n)/(k+1) \rceil$.
Denote the obtained graph by $H$.  Let
$K\subseteq H$ be a subgraph of $H$ containing some vertices from $I$.
If $K$ intersects two distinct cliques $C_i, C_j$, then deleting
$V(K)\cap I$ disconnects $V(K)\cap C_i$ from $V(K)\cap C_j$, and thus
the connectivity of $K$ does not exceed $|V(K)\cap I|\le |I| \le k^2/n$. If
$K$ intersects a unique clique $C_i$, then deleting all neighbors of
$v\in V(K)\cap I$ from $C_i$ disconnects $v$ from the rest of $K$,
implying $\kappa(H[K])\le deg_{C_i}v\le k^2/n$.

\begin{proof}[Proof of Theorem~\ref{connthm}]

Let us begin with part~\ref{connSuf}.  Let $H$ be a graph
with minimum degree at least $dn$.
Let $(V_1,\ldots,V_t)$ be a
partition of $V(H)$ such that $|V_i|\ge dn/8$ and $H[V_i]$ is
$(d^2n/16)$-connected, $1\le i\le t$. The existence of such a partition is
guaranteed by Lemma \ref{part_con}.
It is enough to show that the graph $G_{H,m}$ \whp\ contains a
matching of size $k$ in each bipartite graph induced by $(V_i, V_j)$.
Let $F_{ij}$ be a maximum matching between $V_i$ and $V_j$ in $H$. If
$|F_{ij}|\ge k$ we are done. Assume therefore that $|F_{ij}|<k$.
Choose a subset $A\subset V_i\setminus\bigcup_{e\in F_{ij}}e$
and a subset $B\subset V_j\setminus\bigcup_{e\in F_{ij}}e$ of cardinalities
$|A|=|B|=3dn/32$. Obviously $H$ has no edges connecting $A$ and $B$ due
to the maximality of $F_{ij}$.

Consider first the case $k=O(1)$. Then the set $R$ contains \whp\
$\omega(1)$ random edges between $A$ and $B$. If
$\omega(1)=o(\sqrt{n})$, then those random edges form \whp\ a matching
as required.

Let now $k=\omega(1)$. Due to Remark \ref{rem1} we may consider
$G_{H,p}$ with $p=\frac{1280 k}{d^2n^2}$. Then the probability that the
set $R$ of random edges does not have a matching of size $k$ between $A$
and $B$ can be estimated from above by:
$$
\sum_{i=0}^{k-1}{\binom{\frac{3dn}{32}}{i}}^2i!p^i
(1-p)^{\left(\frac{3dn}{32}-i\right)^2}
$$
(This expression  arises from first
choosing the size $i<k$ of a maximum matching $M$ between $A$ and
$B$ in $R$, then choosing the vertices of $M$ in $A$ and $B$,
then forming a pairing between
them, then requiring all matching edges to be present in $R$, and
finally requiring all
edges lying outside the vertices of the matching to be absent). We can
estimate the above expression from above by:
\begin{equation*}
\begin{split}
e^{-p\left(\frac{3dn}{32}-k\right)^2}
 \left( 1+\sum_{i=1}^{k} \left(\frac{3edn}{32i}\right)^{2i}i^ip^i \right)
& < e^{-p\left(\frac{3dn}{32}-\frac{d^2n}{32}\right)^2}
 \left(1+\sum_{i=1}^k\left[\left(\frac{3edn}{32i}\right)^2ip\right]^i
 \right)\\
& < e^{-\frac{d^2n^2p}{256}}
\left(1+\sum_{i=1}^k\left(\frac{9e^2d^2n^2p}{1024i}\right)^i\right) \\
& = e^{-5k}\left(1+\sum_{i=1}^k\left(\frac{45e^2k}{4i}\right)^i\right)
\end{split}
\end{equation*}
In the last sum each summand is easily seen to be at least twice larger
than the previous summand, and hence the above estimate is at most
$e^{-5k}2(\frac{45e^2k}{4k})^k=o(1)$.

As to part~\ref{connNec}, let $H_0$  consist of $\lfloor
\frac{n}{dn+1}\rfloor$ disjoint cliques $C_1,\ldots,C_t$ each of size at
least $dn+1$. If $H\cup R$ is $k$-connected, then each $C_i$ is incident
to at least $k$ edges, implying $|R|\ge
\frac{kt}{2}=\frac{k}{2}\lfloor\frac{n}{dn+1}\rfloor$.
\end{proof}

\noindent{\bf Acknowledgment:}  The authors thank the anonymous
referees for their helpful comments.

%
%\section{Conclusion}
%
%Other monotone conditions may be of interest in this model of random
%graphs.  In addition, the minimum degree condition studied here was
%bounded by a simple linear function of $n$.  It may be worthwhile to
%study more complex functions of the form $dn+f(n)$ with $f(n)=o(n)$.
%As in~\cite{BFM}, corresponding monotone properties of directed graphs
%seem to be a natural extension of the above work.
%

\end{document}